  \renewcommand{\Pr}{\mbox{\rm Pr}}
  \newcommand{\Exp}{{\mathbb{E}}}
  \newcommand{\E}{\mathbb{E}}
  \DeclareMathOperator{\gw}{GW}
  \newcommand{\R}{\mathbb{R}} % reals
  \newcommand{\N}{\mathbb{N}} % natural numbers
  \newcommand{\Z}{\mathbb{Z}} % integers
  \newcommand{\pmset}[1]{\{-1,1\}^{#1}} % hypercube in +-1 basis
  \newcommand{\bset}[1]{\{0,1\}^{#1}} % hypercube
  \DeclareMathOperator{\im}{im} % image
  \newcommand{\cF}{\mathcal{F}}
  \newcommand{\cM}{\mathcal{M}}
  \newcommand{\cP}{\mathcal{P}}
  \newcommand{\st}{:\,} % "such that" to define sets
  \newcommand{\ie}{{i.e.}}  
  \newcommand{\eps}{\varepsilon}
  \newcommand{\ceil}[1]{\lceil{#1}\rceil}
  \newcommand{\floor}[1]{\lfloor{#1}\rfloor}
  \newcommand{\indic}{1}
  \newcommand{\norm}[1]{\left\| #1 \right\|}
  \newcommand{\beq}{\begin{equation}}
  \newcommand{\eeq}{\end{equation}}
  \newcommand{\beqn}{\begin{equation*}}
  \newcommand{\eeqn}{\end{equation*}}
  \newcommand{\beqr}{\begin{eqnarray}}
  \newcommand{\eeqr}{\end{eqnarray}}
  \newcommand{\beqrn}{\begin{eqnarray*}}
  \newcommand{\eeqrn}{\end{eqnarray*}}
  \newcommand{\bmline}{\begin{multline}}
  \newcommand{\emline}{\end{multline}}
  \newcommand{\bmlinen}{\begin{multline*}}
  \newcommand{\emlinen}{\end{multline*}}
  \theoremstyle{plain}
  \newtheorem{theorem}{Theorem}[section]
  \newtheorem{lemma}[theorem]{Lemma}
  \newtheorem{proposition}[theorem]{Proposition}
  \newtheorem{corollary}[theorem]{Corollary}
  \theoremstyle{definition}
  \newtheorem{definition}[theorem]{Definition}
  \theoremstyle{remark}
  \newtheorem{remark}[theorem]{Remark}
  \renewenvironment{proof}[1][]{
    	\begin{trivlist}
     	\item[\hspace{\labelsep}{\em\noindent Proof#1:\/}]}
     	{{\hfill$\Box$}
    	\end{trivlist}
  }
\newif\ifnotes\notesfalse
\definecolor{mygrey}{gray}{0.50}
\newcommand{\notename}[2]{{\textcolor{red}{\footnotesize{\bf (#1:} {#2}{\bf ) }}}}
\newcommand{\noteswarning}{{\begin{center} {\Large WARNING: NOTES ON}\end{center}}}
\newcommand{\notename}[2]{{}}
\newcommand{\noteswarning}{{}}
\author[Jop Bri\"{e}t]{Jop Bri\"et}
\address{CWI, Science Park 123, 1098 XG Amsterdam, The Netherlands}
\email{j.briet@cwi.nl}
\thanks{J.B.\ was supported by VENI grant~639.071.409 and the Gravitation-grant NETWORKS-024.002.003 from the Netherlands Organisation for Scientific Research~(NWO)}
\author[Sivakanth Gopi]{Sivakanth Gopi}
\address{Department of Computer Science, Princeton University, Princeton, NJ 08540, USA}
\email{sgopi@cs.princeton.edu}
\thanks{S.G.\ was supported by NSF CAREER award 1451191 and NSF grant CCF-1523816}
\begin{document}
\title{Gaussian width bounds with applications to arithmetic progressions in random settings}
%\maketitle

%%%% DON'T REMOVE: INSTEAD, CHANGE THE FLAG IN MARGINNOTES.TEX %%%%%
\noteswarning
%%%% DON'T REMOVE %%%%%

\begin{abstract}
Motivated by a problem on random differences in Szemer\'edi's theorem and another problem on large deviations for arithmetic progressions in random sets, we prove upper bounds on the Gaussian width of special point sets in~$\R^k$. 
The point sets are formed by the image of the $n$-dimensional Boolean hypercube under a mapping $\psi:\mathbb{R}^n\to\mathbb{R}^k$, where each coordinate is a constant-degree multilinear polynomial with 0-1 coefficients. We show the following applications of our bounds.
Let $[\mathbb{Z}/N\mathbb{Z}]_p$ be the random subset of~$\mathbb{Z}/N\mathbb{Z}$ containing each element  independently with probability~$p$.\\
\begin{itemize}
\item A set $D\subseteq \mathbb{Z}/N\mathbb{Z}$ is \emph{$\ell$-intersective} if any dense subset of~$\mathbb{Z}/N\mathbb{Z}$ contains a proper {$(\ell+1)$}-term arithmetic progression with common difference in~$D$.
Our main result implies that $[\mathbb{Z}/N\mathbb{Z}]_p$ is $\ell$-intersective with probability~$1 - o(1)$ provided $p \geq \omega(N^{-\beta_\ell}\log N)$ for $\beta_\ell = (\lceil(\ell+1)/2\rceil)^{-1}$. This gives a polynomial improvement for all $\ell \ge 3$ of a previous bound due to Frantzikinakis, Lesigne and Wierdl, and reproves more directly the same improvement shown recently by the authors and Dvir (here we avoid the theories of locally decodable codes and quantum information).\\
\item Let $X_k$ be the number of $k$-term arithmetic progressions in~$[\mathbb{Z}/N\mathbb{Z}]_p$ and consider  the large deviation rate  $\rho_k(\delta) = \log\Pr[X_k \geq (1+\delta)\mathbb{E}X_k]$. 
We give quadratic improvements of the best-known range of~$p$ for which a highly precise estimate of~$\rho_k(\delta)$  due to Bhattacharya, Ganguly, Shao and Zhao is valid for all odd $k \geq 5$.
In particular, the estimate holds if $p \geq \omega(N^{-c_k}\log N)$ for $c_k = (6k\lceil(k-1)/2\rceil)^{-1}$. \\
\end{itemize}
We also discuss connections with locally decodable codes and the Banach-space notion of type for injective tensor products of $\ell_p$-spaces.
\end{abstract}
\maketitle

\section{Introduction}
\label{sec:intro}

The \emph{Gaussian width} of a point set $T\subseteq \R^k$ measures the expected maximum correlation between~$T$ and a standard Gaussian vector~$g = N(0,I_k)$, and is given by
\beqn
\gw(T)
=
\Exp\big[\sup_{x\in T}\langle x, g\rangle\big].
\eeqn
The terminology reflects the fact that the Gaussian width of a set is proportional to~$\sqrt{k}$ times its average width in a random direction.
While this quantity plays a central role in high-dimensional probability, it is notoriously hard to estimate in general; see for instance~\cite{Talagrand:2014} for an extensive discussion of this problem.

Our main result gives upper bounds on the Gaussian width of sets that appear naturally in the context of probabilistic combinatorics.
The relevant sets are given by the image of the $n$-dimensional Boolean hypercube under a certain polynomial mapping $\psi:\R^n\to\R^k$. %, that is, sets of the form~$\psi(\bset{n})$.
In particular, we focus on the case where each coordinate $\psi_i:\R^n\to\R$ is a multilinear polynomial with 0-1 coefficients.
%A binary polynomial can be given in terms of a family $E_i$ of subsets of $[n] :=\{1,\dots,n\}$ by
%\beqn
%\psi_i(x)
%=
%\sum_{e\in E_i}\prod_{j\in e} x_j,
%\eeqn
Say that a polynomial  has \emph{multiplicity}~$t$ if each of its variables has a nonzero exponent in at most~$t$ monomials in its support.\footnote{
Here and below, $\lesssim, \gtrsim$ denote upper and lower bounds up-to absolute constants and $\lesssim_\eps, \gtrsim_\eps$ denote upper and lower bounds up-to constants depending on a parameter~$\eps$.}

%Define the \emph{variable degree} of~$\psi_i$ by $\max_{j\in[n]}|\{e\in E_i\st j\in e\}|$; in other words, this is the degree of the hypergraph with vertex set~$[n]$ and edge set~$E_i$.
%We shall say that a polynomial map~$\psi$ with such coordinates is \emph{binary} and that it has degree~$d$ and variable degree~$t$.
% Our main result is to bound the Gaussian width of the image of a low degree polynomial mapping where each coordinate is a polynomial corresponding to a hypergraph. 
 %Here Gaussian width of a subset $T\subset \R^k$ is defined as $\gw(T):=\E_{g}\sup_{t\in T} \inpro{g}{t}$ where $g$ is a standard $k$-dimensional Gaussian. 
%We consider polynomials $p\in \R[x_1,\dots,x_n]$ of the form
%\beq\label{eq:match_poly}
%p(x) =
%\sum_{i=1}^K\sum_{S\in\mathcal F_i} \prod_{j\in S} x_j,
%\eeq
%where each $\mathcal F_i\subseteq {[n]\choose 2r}$ is a family of pairwise disjoint $2r$-subsets of~$[n]$.
%These polynomials are homogeneous, multilinear and of degree~$2r$.
%Given $p_1,\dots,p_k\in \R[x_1,\dots,x_k]$ and independent Gaussian random variables $g_1,\dots,g_k$ with mean zero and variance~1, define
%\beq\label{eq:GW}
%\gw(\{p_1,\dots,p_k\})
%=
%\Exp \max\Big\{ \frac{1}{n}\sum_{i=1}^k g_ip_i(y)\st y\in \bset{n}\Big\}.
%\eeq
%This value is the Gaussian width of the set
%\beqn
%\big\{\tfrac{1}{n}\big(p_1(y),\dots,p_k(y)\big) \st y\in \bset{n}\big\} \subseteq \R^k.
%\eeqn

\begin{theorem}\label{thm:main}
%For every~$s\in \N$ there exists an $n_0(s)\in \N$ such that the following holds.
%Let~$d,k,n, t$ be positive integers and l
Let $\psi:\R^n\to\R^k$ be a polynomial mapping such that each coordinate is multilinear, has 0-1 coefficients, and has degree at most~$d$ and multiplicity~$t$.
%such that $n \geq n_0(s)$. %\jnote{simplify the following}$n \geq 4r$ and $1 - e^{-1/n^{1/r}} \geq (en)^{-1/r}$. 
%let $H_1,\dots,H_k$ be $s$-hypergraphs with vertex set~$[n]$. 
%Let~$p_{H_1},\dots,p_{H_k}\in \R[x_1,\dots,x_n]$ be the degree-$2r$ polynomials corresponding to $H_1,\dots,H_k$. 
Then,
\begin{align*}
%\gw(\{p_1,\dots,p_k\}):= \gw(\{(p_{H_1}(y),\dots,p_{H_k}(y)):y\in \{0,1\}^n\})
\gw\big( \psi(\bset{n})\big)
\lesssim_{d}
nt\,\sqrt{kn^{1 - \frac{1}{\ceil{d/2}}}\log n}. 
\end{align*}
\end{theorem}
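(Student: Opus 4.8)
The plan is to control the Gaussian process $x\mapsto\inpro{\psi(x)}{g}=\sum_{i=1}^{k}g_i\psi_i(x)$ on $x\in\bset n$, where $g=N(0,I_k)$. First I would reduce to the homogeneous case: splitting each $\psi_i$ into homogeneous parts $\psi_i=\sum_{e=0}^{d}\psi_i^{(e)}$, the map $\psi^{(e)}=(\psi_i^{(e)})_i$ again has $0$--$1$ coefficients and multiplicity at most $t$, and
\[
\gw\big(\psi(\bset n)\big)\le\sum_{e=0}^{d}\gw\big(\psi^{(e)}(\bset n)\big)
\]
since the supremum of a sum is at most the sum of suprema; as $\ceil{e/2}\le\ceil{d/2}$ it suffices to treat one homogeneous degree $e$, and I set $r=\ceil{e/2}$. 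Identifying $x$ with its support $S=\mathrm{supp}(x)$, the value $\psi_i(x)$ equals the number $N_i(S)$ of degree-$e$ monomials of $\psi_i$ contained in $S$; the multiplicity bound gives $N_i(S)\le t|S|\le tn$ and hence $\norm{\psi(x)}_2\le tn\sqrt k$ for all $x$ — the crude Lipschitz estimate responsible for the leading $nt$. The degrees $e\le 2$ I would dispatch directly: there $\inpro{\psi(x)}{g}$ is linear or a quadratic form $\tfrac12 x^{\mathsf T}M_gx$ with $M_g=\sum_i g_iM_i$, each $M_i$ a $0$--$1$ matrix whose row and column sums are at most $t$ (this is precisely the multiplicity bound), so $\norm{M_i}_{\mathrm{op}}\le t$ and $\max_{x\in\bset n}\tfrac12 x^{\mathsf T}M_gx\le\tfrac n2\norm{M_g}_{\mathrm{op}}$; the matrix Khintchine inequality for Gaussian series, together with $\norm{\sum_iM_iM_i^{\mathsf T}}_{\mathrm{op}}\le kt^2$, gives $\E\norm{M_g}_{\mathrm{op}}\lesssim t\sqrt{k\log n}$, matching the claimed bound with $n^{1-1/1}=1$.

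For $e\ge 3$ the engine is a \emph{$\ceil{e/2}$-flattening}. Writing $\inpro{\psi(x)}{g}=\sum_{|A|=e}a_Ax^A$ with $a_A=\sum_i g_i[A\in\mathrm{supp}\psi_i]$ and using that every size-$e$ set splits into two nonempty parts of size at most $r$, the identity $x^A=c_e^{-1}\sum_{A=P\sqcup Q}x^Px^Q$ over ordered splittings yields
\[
\inpro{\psi(x)}{g}=c_e^{-1}\inpro{v}{M_g\,v},\qquad v=(x^P)_{1\le|P|\le r},\qquad M_g=\sum_{i=1}^{k}g_iM_i,
\]
where $(M_i)_{P,Q}=[P\cap Q=\emptyset,\ P\sqcup Q\in\mathrm{supp}\psi_i]$. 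Since each part is nonempty, fixing a variable of $P$ and invoking the multiplicity bound shows every row and column sum of $M_i$ is at most $t$, so $\norm{M_i}_{\mathrm{op}}\le t$; likewise $\sum_iM_iM_i^{\mathsf T}$ and $\sum_iM_i^{\mathsf T}M_i$ have row sums at most $kt^2$. Matrix Khintchine then gives $\E\norm{M_g}_{\mathrm{op}}\lesssim_r t\sqrt{k\log n}$, with a Gaussian-type tail at scale $t\sqrt{k\log n}$.

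The main obstacle is that the naive estimate $\inpro{v}{M_g v}\le\norm{M_g}_{\mathrm{op}}\norm{v}_2^2$ is far too weak: $\norm{v}_2^2=\#\{P\subseteq S:|P|=r\}\asymp|S|^{r}$, which at the all-ones point is $\asymp n^{r}$, losing a factor $n^{r-1}$ against the target — precisely because at dense $x$ the chaos is actually small. To recover the exponent $n^{1-1/r}$ I would chain over the cube while exploiting the $0$--$1$ constraint, splitting $\bset n$ by the support size $|S|=w$ at a threshold $w_0$: in the sparse range a union bound over the $\binom n{\le w_0}$ points, each a centered Gaussian of standard deviation at most $tw_0\sqrt k$, contributes $\lesssim t\,w_0^{3/2}\sqrt{k\log n}$; in the dense range one restricts the flattened form to the $\le r$-subsets contained in $S$ — a principal submatrix of $M_g$ that still has small row sums — and runs the operator-norm estimate on that effectively lower-dimensional object, combining scales by a peeling argument so that the $|S|^r$ factor is never incurred at full strength. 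Choosing $w_0$ to balance these contributions and summing the $O(d)$ homogeneous pieces should give $\gw(\psi(\bset n))\lesssim_d nt\sqrt{k\,n^{1-1/\ceil{d/2}}\log n}$, the $\sqrt{\log n}$ coming from the matrix Khintchine step, the $nt$ from the Lipschitz estimate, and the exponent $1-1/\ceil{d/2}$ from the sparse/dense balance. I expect the dense-range bookkeeping — making the restricted operator-norm bound interact with the chaining so that no power of $n$ beyond $n^{1-1/r}$ leaks in — to be the delicate point.
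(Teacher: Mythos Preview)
Your reduction to homogeneous pieces and your treatment of degrees $e\le 2$ are correct, and the $r$-flattening you describe for $e\ge 3$ does yield matrices $M_i$ with row and column sums at most $t$, whence $\E\|M_g\|\lesssim_r t\sqrt{k\log n}$ by matrix Khintchine. The genuine gap is exactly where you locate it, but your proposed repair does not close it. For $r\ge 2$ the flattened estimate $|\langle v_S,M_gv_S\rangle|\le\|M_g\|\,\|v_S\|_2^2$ is \emph{never} sharper than the crude per-point Gaussian bound: on the shell $|S|=w$ the former gives $w^r\,t\sqrt{k\log n}$ while the latter (standard deviation $\le tw\sqrt k$ against $\binom{n}{w}$ points) gives $\lesssim tw^{3/2}\sqrt{k\log n}$, and $w^r\ge w^{3/2}$ for all $w\ge 1$ once $r\ge 2$. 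Restricting $M_g$ to the principal submatrix on $\{P\subseteq S\}$ does not help either: its norm is at most that of $M_g$, and to make the restricted-norm estimate uniform over all $S$ of weight $w$ you must union-bound over $\binom{n}{w}$ choices, which reinserts the factor you were trying to remove. Your sparse/dense balance therefore collapses to the trivial estimate $\lesssim tn^{3/2}\sqrt{k}$, short of the target by a factor $n^{1/(2r)}$. More to the point, the quantity $n^{1-1/r}$ appears nowhere in your setup---neither in the dimension $\asymp n^r$ of the flattening nor in the shell entropies $\log\binom{n}{w}$---so no peeling over Hamming weight can manufacture it.

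The paper's route is entirely different. After homogenizing to degree $2r$ it embeds $x\in\pmset{n}$ into $\R^N$ with $N=n^m$ via the tensor power $x\mapsto x^{\otimes m}$, choosing $m=C_r\,n^{1-1/r}$. A generalized birthday paradox shows that a uniformly random index $f\in[n]^m$ hits at least $r$ elements of any fixed $2r$-edge with probability of order $1/n$; this is used to build, for each $2r$-uniform $H$ of degree $\le t$, a matrix $A\in\R^{N\times N}$ with $\|A\|\lesssim_r t$ such that $p_H(x)=\tfrac{n}{c_rN}\langle Ax^{\otimes m},x^{\otimes m}\rangle$ for all $x\in\pmset n$. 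Since $\|x^{\otimes m}\|_2^2=N$ exactly, the quadratic-form step loses only a factor $n$, not $n^r$; Tomczak--Jaegermann then gives $\E\big\|\sum_i g_iA_i\big\|\lesssim t\sqrt{k\log N}$, and the exponent $n^{1-1/r}$ enters through $\log N=m\log n$. The birthday-paradox tensor embedding is the missing idea.
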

The factor~$nt$ can be seen as a natural scaling due to the fact that each coordinate~$\psi_i$ maps the Boolean hypercube into~$[0, nt]$ (which follows from a handshaking lemma).
In the special case where~$\psi$ is linear, $\psi(x) = (\langle c_1,x\rangle, \dots,\langle c_k,x\rangle)$, for some $c_1,\dots,c_k\in \bset{n}$, the set $\psi(\bset{n})$ is easily seen to be contained in the set $T = \{(\langle c_i, y\rangle)_{i=1}^k \st \|y\|_{\ell_\infty} \leq 1\}$.
The Gaussian width of the former set is thus at most that of the latter, which in turn is at most
\beqn
%\gw\big(\psi(\bset{n})\big) \leq \gw(T)  = 
\Exp\Big[\Big\|\sum_{i=1}^k g_ic_i\Big\|_{\ell_1}\Big] \lesssim n\sqrt{k},
\eeqn
as the sum is an $n$-dimensional Gaussian vector whose coordinates have variance at most~$k$.
Perhaps surprisingly, Theorem~\ref{thm:main} shows that if~$\psi$ is quadratic and has constant multiplicity, then the Gaussian width is at most a factor~$\sqrt{\log n}$ larger than the above upper bound.
This turns out to be an easy consequence of a 1974 random matrix inequality due to Tomczak--Jeagermann~\cite{Tomczak-Jaegermann:1974}, which also forms the basis for our proof of the higher-degree cases.
The proof of Theorem~\ref{thm:main} (given in Section~\ref{sec:proof}) proceeds in two steps: first we reduce to the case of homogeneous mappings of even degree, and then we reduce to the quadratic case.
The first step is the reason for the ceiling in~$\ceil{d/2}$ appearing in the exponent of~$n$ and it would be interesting to know if one can remove this ceiling; \ie, does the result hold with the exponent~$1 - 2/d$?
More generally, an exponent of the form~$o(1)$ for constant~$d$ would imply the truth of some unresolved conjectures on variants of Szemer\'edi's theorem, large deviations and coding theory (topics which we discuss below).
The link to coding theory also implies that the bound is optimal for $d = 2$ and that the smallest possible exponent is at least~$(\log\log n)^{2-o(1)}/\log n$ for $d = 3$ and $(\log\log n)^{r-o(1)}/\log n$ for $d = 2^r$, $r \geq 3$.
Finally, a close inspection of the proof of Theorem~\ref{thm:main} shows that it also holds for polynomials with non-negative integer coefficients, for a suitable change of the definition of multiplicity. In the following four subsections we discuss two applications of this result and links with error correcting codes and the Banach space notion of type.

\subsection{Random differences in Szemer\'edi's Theorem}

In 1975 Szemer\'edi~\cite{Szemeredi:1975} proved that any subset of the integers of positive upper density contains arbitrarily long arithmetic progressions, answering a famous open question of Erd\H{o}s and Tur\'an.
%that for every $\alpha>0,\ell\in \N$ and $N$ sufficiently large depending on $\alpha,\ell$, every subset $A\subset [N]$ of size at least $\alpha N$ will contain a non-trivial arithmetic progression of length $\ell+1$. 
%It will imply easily that dense sets should contain a positive proportion of all arithmetic progressions.
It is well known that this is equivalent to the assertion that for every positive integer~$k$ and any $\alpha \in (0,1)$, there exists an $N_0(k,\alpha)\in\N$ such that if $N \geq N_0(k,\alpha)$ and $A\subseteq \Z/N\Z$ is a set of size $|A| \geq \alpha N$, then~$A$ must contain a proper $k$-term arithmetic progression.
Certain refinements of Szemer\'edi's theorem concern sets $D\subseteq \N$ for which the theorem still holds true when the arithmetic progressions are required to have common difference from~$D$.
Such sets are usually referred to as intersective sets in number theory, or recurrent sets in ergodic theory.
More precisely, a set $D\subseteq \N$ is $\ell$-intersective (or $\ell$-recurrent) if  any set $A\subseteq\N$ of positive upper density
%, $\limsup_{N\to\infty} N^{-1}|A\cap [N]| > 0$, 
has an $(\ell+1)$-term arithmetic progression with common difference in~$D$.
%for every $\alpha>0$ there exists $N_0(\alpha)$ such that the following holds.
%For every $N\ge N_0(\alpha)$, every subset $A\subset [N]$ of size at least $\alpha N$ contains a non-trivial arithmetic progression of length $\ell+1$ with common difference in $D$.
%\begin{definition}
%Let $(D_k\subset \Z/k\Z)_{k\in \N}$ be a sequence of subsets. We say that a $(D_k\subset \Z/k\Z)_{k\in \N}$ is $\ell$-recurrent or $\ell$-intersective if for every $\alpha>0$ there exists $N_0(\alpha)$ such that the following is true:
%
%For every $N\ge N_0(\alpha)$, every subset $A\subset \Z/N\Z$ of size at least $\alpha N$ will contain a non-trivial arithmetic progression of length $\ell+1$ with common difference in $D_N$.
%\end{definition}
Szemer\'edi's theorem then states that $\N$ is $\ell$-intersective for every $\ell\in \N$, but much smaller intersective sets exist.
For example, for any $t\in \N$, the set $\{1^t,2^t,3^t,\dots\}$ is $\ell$-intersective for every~$\ell$, which is a special case of more general results of S\'ark\"{o}zy~\cite{Sarkozy:1978a} when $\ell = 1$ and of Bergelson and Leibman~\cite{Bergelson:1996} for all $\ell \geq 1$.
The shifted primes $\{p - 1\st \text{$p$ is prime}\}$ and $\{p + 1\st \text{$p$ is prime}\}$ are also $\ell$-intersective for every~$\ell\in\N$, shown by S\'ark\"{o}zy~\cite{Sarkozy:1978b} when~$\ell = 1$ and in a more general setting by Wooley and Ziegler~\cite{Wooley:2012} for all~$\ell \geq 1$.

It is natural to ask at what density, random sets become $\ell$-intersective.
To simplify the discussion, we will look at the analogous question in $\Z/N\Z$. 
\begin{definition}
Let~$\ell$ be a positive integer and~$\alpha \in (0,1]$.
A subset $D\subseteq \Z/N\Z$ is $(\ell, \alpha)$-intersective if any subset $A\subseteq \Z/N\Z$ of size~$|A| \geq \alpha N$ contains a proper $(\ell+1)$-term arithmetic progression with common difference in~$D$.
\end{definition}
%For $\eps > 0$, a set $D\subseteq \Z/N\Z$ is $(\ell, \eps)$-intersective if any subset $A\subseteq \Z/N\Z$ of size $|A| \geq \eps N$ contains a nontrivial $(\ell+1)$-term arithmetic progression with common difference from~$D$. 
It was proved independently by Frantzikinakis et al.~\cite{Frantzikinakis:2012} and Christ~\cite{Christ:2011} that for $\beta_\ell = \frac{1}{2^{\ell-1}}$ and $p\geq\omega(N^{-\beta_\ell}\log N)$, the random set~$[\Z/N\Z]_p$ is $(\ell, \alpha)$-intersective with probability $1 - o(1)$, provided $N\geq N_1(\ell, \alpha)$. 
This was improved for all $\ell \geq 4$ in~\cite{FLW16}, where it was shown that the same result holds with $\beta_\ell = \frac{1}{{\ell+1}}$, though it was conjectured there that $\beta_\ell = 1$ suffices for all~$\ell \geq 1$. 
Based on Theorem~\ref{thm:main} we obtain the following result, which improves on these bounds for all~$\ell \geq 3$.
%We improve this bound by showing that random sets of $\Z/N\Z$ of size $\omega(N^{1-1/\ceil{(\ell+1)/2}}\log N)$ are $\ell$-intersective. 

\begin{theorem}\label{thm:intersective}
For every $\ell\in \N$ and $\alpha \in (0,1)$, there exists an $N_1(\ell, \alpha)\in\N$ such that the following holds.
Let $N\geq N_1(\ell, \alpha)$ be an integer and let $$\beta_\ell = \frac{1}{\ceil{\frac{\ell+1}{2}}} \quad \text{and} \quad p \geq \omega(N^{-\beta_\ell}\log N).$$ 
%and let $D = [\Z/N\Z]_p$.
Then, with probability $1 - o(1)$, the set $[\Z/N\Z]_p$ is $(\ell,\alpha)$-intersective.
%any subset $A\subset \Z/N\Z$ of size $|A| \ge \alpha N$ contains a non-trivial arithmetic progression of length $\ell+1$ with common difference in~$[\Z/N\Z]_p$.
\end{theorem}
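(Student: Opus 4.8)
The plan is to reduce $(\ell,\alpha)$-intersectivity to a uniform lower-tail estimate for a linear random process indexed by a point set of the kind treated in Theorem~\ref{thm:main}, and then to control the relevant Gaussian width by applying that theorem not to the full point set but to a randomly \emph{subsampled} version of it. For $A\subseteq\Z/N\Z$ and $d\in\Z/N\Z$ put $c_d(A)=\sum_{x\in\Z/N\Z}\1_A(x)\1_A(x+d)\cdots\1_A(x+\ell d)$ and $N_A(D)=\sum_{d\neq 0}c_d(A)\,\1[d\in D]$. A standard Varnavides-type averaging on top of Szemer\'edi's theorem gives a constant $c=c(\ell,\alpha)>0$ with $\sum_{d\neq0}c_d(A)\geq cN^2$ for every $A$ with $|A|\geq\alpha N$ and $N$ large, hence $\E_D N_A(D)=p\sum_{d\neq0}c_d(A)\geq cpN^2$. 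Since the number of proper $(\ell+1)$-APs in $A$ with common difference in $D$ equals $N_A(D)$ minus at most $O_\ell(N)$ improper ones (and is exactly $N_A(D)$ when $N$ is prime and $d\neq0$), and since $cpN^2\gg N$ in the stated range of $p$, it suffices to show $\Pr\big[\sup_{|A|\geq\alpha N}|N_A(D)-\E_D N_A(D)|\geq\tfrac12 cpN^2\big]=o(1)$, and by Markov's inequality this reduces to
\beqn
\E_D\sup_{|A|\geq\alpha N}\big|N_A(D)-\E_D N_A(D)\big|=o(pN^2).
\eeqn

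To estimate this expectation I would recognize the point set behind the process. Writing $y=\1_A\in\bset N$, one has $c_d(A)=\psi_d(y)$ with $\psi_d(y)=\sum_x y_x y_{x+d}\cdots y_{x+\ell d}$, which on $\bset N$ is multilinear, has $0$-$1$ coefficients, degree at most $\ell+1$, and multiplicity at most $\ell+1$ (the variable $y_x$ occurs only in the monomials with starting point $x-jd$, $0\le j\le\ell$). Thus $T:=\{(\psi_d(y))_{d\in\Z/N\Z\setminus\{0\}}:y\in\bset N\}\subseteq\R^{N-1}$, as well as each coordinate projection $P_ST$ of $T$ onto a subset $S\subseteq\Z/N\Z\setminus\{0\}$ of the difference-values, is a set of exactly the type to which Theorem~\ref{thm:main} applies, with degree and multiplicity at most $\ell+1$, in $n=N$ variables, and ambient dimension $|S|$; note also that $0\in T$ (the image of $0\in\bset N$).

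Next, writing $\mathrm{dev}_A(D):=\langle c(A),\1_D-p\1\rangle=N_A(D)-\E_D N_A(D)$, I would symmetrize against an independent copy $D'$ of $D$: $\E_D\sup_A|\mathrm{dev}_A(D)|\le\E_{D,D'}\sup_A|\langle c(A),\1_D-\1_{D'}\rangle|$. Conditioning on $\zeta:=|\1_D-\1_{D'}|\in\bset N$, whose coordinates are i.i.d.\ $\mathrm{Bernoulli}(2p(1-p))$, the vector $\1_D-\1_{D'}$ has the law of $\eps\cdot\zeta$ for an independent Rademacher vector $\eps$; since $0\in T$ and Rademacher averages are dominated by Gaussian ones, $\E_\eps\sup_A|\langle c(A),\eps\cdot\zeta\rangle|\lesssim\E_\eps\sup_A\langle \zeta\cdot c(A),\eps\rangle\lesssim\gw(P_{\mathrm{supp}\,\zeta}T)$. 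Applying Theorem~\ref{thm:main} to $P_{\mathrm{supp}\,\zeta}T$ (degree and multiplicity at most $\ell+1$, $n=N$, ambient dimension $|\mathrm{supp}\,\zeta|$) gives $\gw(P_{\mathrm{supp}\,\zeta}T)\lesssim_\ell N\sqrt{|\mathrm{supp}\,\zeta|\,N^{1-\beta_\ell}\log N}$ with $\beta_\ell=1/\ceil{(\ell+1)/2}$, and since $\E_\zeta\sqrt{|\mathrm{supp}\,\zeta|}\le\sqrt{2pN}$ this yields $\E_D\sup_A|\mathrm{dev}_A(D)|\lesssim_\ell\sqrt p\,N^{2-\beta_\ell/2}\sqrt{\log N}$, which is $o(pN^2)$ precisely when $p=\omega(N^{-\beta_\ell}\log N)$, i.e.\ under the hypothesis.

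The step I expect to be the crux is obtaining this $\sqrt p$ saving. Feeding $T$ itself (ambient dimension $\approx N$) into Theorem~\ref{thm:main} would only give $\E_D\sup_A|\mathrm{dev}_A(D)|\lesssim_\ell N^{2-\beta_\ell/2}\sqrt{\log N}$, hence the substantially weaker hypothesis $p=\omega(N^{-\beta_\ell/2}\sqrt{\log N})$; recovering the full exponent $\beta_\ell$ rests on first passing, via symmetrization and the Rademacher-to-Gaussian comparison, to a process driven by the sparse vector $\zeta$ (supported on only $\approx pN$ coordinates), and then applying Theorem~\ref{thm:main} to the \emph{projected} set $P_{\mathrm{supp}\,\zeta}T$, whose reduced ambient dimension $|\mathrm{supp}\,\zeta|\approx pN$ is exactly what produces the $\sqrt p$ factor. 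The remaining ingredients (the uniform Varnavides bound $\sum_{d\neq0}c_d(A)\gtrsim_{\ell,\alpha}N^2$, the trivial bound $c_d(A)\le N$, the negligible improper-AP correction, and $0\in T$) are routine.
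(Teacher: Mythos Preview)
Your proof is correct and rests on the same three ingredients as the paper's: Varnavides' theorem for the uniform lower bound on $\E_D N_A(D)$, symmetrization against an independent copy, and Theorem~\ref{thm:main} applied with ambient dimension of order~$pN$ (rather than~$N$) to extract the decisive $\sqrt{p}$ factor. The paper organizes the argument differently: it first establishes the analogous statement for a random \emph{multiset} $D_k$ of fixed size~$k$ (Proposition~\ref{prop:intersective_Dk}), where the sampled differences $y_1,\dots,y_k$ are fixed before Theorem~\ref{thm:main} is invoked with $k$ playing the role of the ambient dimension, and then transfers to the binomial model $[\Z/N\Z]_p$ via a Chernoff bound and the conditioning Lemma~\ref{lem:conditioning}. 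Your route---staying in the binomial model, conditioning on $\zeta=|\1_D-\1_{D'}|$, and applying Theorem~\ref{thm:main} to the coordinate projection $P_{\mathrm{supp}\,\zeta}T$---is slightly more direct in that it avoids the model-transfer step, but it achieves the same bound for the same reason (the ambient dimension fed into Theorem~\ref{thm:main} is $|\mathrm{supp}\,\zeta|\approx pN$ rather than~$N$).
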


\subsection{Large deviations for arithmetic progressions}
Let $H = (V,E)$ be a hypergraph over a finite vertex set~$V$ of cardinality~$N$ and for $p\in (0,1)$ denote by~$V_p$ the random binomial subset where each element of~$V$ appears independently of all others with probability~$p$.
Let~$X$ be the number of edges in~$H$ that are induced by~$V_p$. %, that is, the nu~$e\in E$ such that~$e\subseteq V_p$.
%Let~$\Omega$ be a finite set and let~$\mathcal F$ be a family of subsets of~$\Omega$.
%For a parameter $p\in (0,1]$, denote by~$\Omega_p$ the random binomial subset of~$\Omega$ where each element of~$\Omega$ appears independently of all others with probability~$p$.
%For each set~$S\in\mathcal F$, let~$I_S$ be the indicator random variable for the event that~$S\subseteq \Omega_p$.
%Then, the random variable
%\beqn
%X = \sum_{S\in\mathcal F} I_S
%\eeqn
%counts the number of subsets in~$\mathcal F$ that are induced by the random set~$\Omega_p$.
Important instances of the random variable~$X$ include the count of triangles in an Erd\H{o}s--R\'enyi random graph and the count of arithmetic progressions of a given length in the random set~$[\Z/N\Z]_p$.

The study of the asymptotic behavior of~$X$ when~$p = p(N)$ is allowed to depend on~$N$ and~$N$ grows to infinity motivates a large body of research in probabilistic combinatorics.
Of particular interest is the problem of determining the probability that $X$ significantly exceeds its expectation $\Pr[X \geq (1 + \delta)\Exp X]$ for $\delta > 0$, referred to as the \emph{upper tail}.
% determining the probability that~$X$ significantly deviates from its expectation.
%Good methods exist for analyzing the lower tail $\Pr[X \leq t\Exp X]$ for $t\in (0,1)$~\cite{Janson:2000}.
%However, estimating the upper tail $\Pr[X \geq t\Exp X]$ for $t>1$ has been found to be a much harder problem.
Despite the fact that standard probabilistic methods fail to give satisfactory bounds on the upper tail in general, advances were made recently for special instances, in particular for triangle counts~\cite{Lubetzky:2017} and general subgraph counts~\cite{Bhattacharya:2017}.
For more general hypergraphs, progress was made by Chatterjee and Dembo~\cite{Chatterjee:2016} using a novel nonlinear large deviation principle (LDP), which was improved by Eldan~\cite{Eldan:2016} shortly after.
The LDPs give precise estimates on the upper tail that are given in terms of a parameter~$\phi_p$ whose value is determined by the solution to a certain variational problem. 
The range of values of~$p$ for which these estimates are actually valid depends on the underlying hypergraph~$H$.
This splits the problem of estimating the upper tail into two sub-problems: 
(1)~determining for what range of~$p$ the estimate in terms of~$\phi_p$ holds true
and (2)~solving the variational problem to determine the value of~$\phi_p$.
The answer to problem (1) turns out to depend on the Gaussian width of a point set related to~$H$.

This approach was pursued in~\cite{Chatterjee:2016} to estimate the upper tail of the number of 3-term arithmetic progressions in $[\Z/N\Z]_p$, for which the authors solved problem (1). 
The case of longer APs, asking for the upper tail probability of the count~$X_k$ of $k$-term arithmetic progressions in $[\Z/N\Z]_p$, was recently treated by  Bhattacharya et al.~\cite{Bhattacharya:2016}. 
They
solved the variational problem~(2) for $N$ prime and gave bounds for the relevant Gaussian width towards solving problem~(1).
Based on this, they showed that if $k\geq 3$ and $\delta > 0$ are fixed and $p$ tends to zero sufficiently slowly as $N\to\infty$ along the primes, then
\beq\label{eq:uppertail}
\Pr[X_k \geq (1+\delta)\Exp X_k] = p^{(1 + o(1))\sqrt{\delta} p^{k/2}N}.
\eeq
%To prove the upper bound, the authors solve the associated variational problem.

Similar results were shown for the analogous problem over $\{1,\dots,N\}$ (in which case $N$ no longer needs to be prime), but we shall focus on the problem in $\Z/N\Z$  for ease of exposition.
The rate at which~$p$ is allowed to decay for~\eqref{eq:uppertail} to hold turns out to depend on Gaussian widths of the form featuring in Theorem~\ref{thm:main}.
%depends on the Gaussian width of the image of~$\bset{\Z/N\Z}$ under the gradient~$\psi = \nabla p_H$, where~$H$ is the hypergraph over~$\Z/N\Z$ whose edges are formed by $k$-term arithmetic progressions.
The bounds proved in~\cite{Bhattacharya:2016} imply that~\eqref{eq:uppertail} holds provided
$p \geq N^{-c_k}(\log N)^{\eps_k}$ for
\begin{align*}
c_3 = \frac{1}{18},
\quad
c_4 = \frac{1}{48}
\quad
\text{and}
\quad
c_k = \frac{1}{6k(k-1)}
\quad
\text{for $k\geq 5$},
\end{align*}
and absolute constants~$\eps_k \in (0,\infty)$ depending only on~$k$.
%and better bounds are given for $k = 3,4$.
However, the authors conjecture that a probability~$p$ slightly larger than~$N^{-1/(k-1)}$ suffices for all~$k$. %~\cite{Bhattacharya:2016}.
Some support for this conjecture is given by a result of Warnke~\cite{Warnke:2016} showing that for all $p \geq (\log N/N)^{1/(k-1)}$, the logarithm of the upper tail (also referred to as the large deviation rate) of the $k$-AP count in $\{1,\dots,N\}_p$ is given by $\Theta_k(\sqrt{\delta} p^{k/2} N\log p)$, where the asymptotic notation hides constants depending only on~$k$.
Notice that~\eqref{eq:uppertail} is more accurate than this result in that it (almost) determines those constants, though currently for a more narrow range of~$p$.\footnote{The main motivation for finding such precise estimates of the upper tail probability is not so much the problem itself as it is to understand structure of the set~$[\Z/N\Z]_p$ conditioned on~$X_k$ being much larger than its expectation (see~\cite{Bhattacharya:2016}).}
%With regard to the constants~$c_k$, Theorem~\ref{thm:main} implies that for all $k \geq 3$ it suffices to set
%\beqn
%c_k = \frac{1}{6k\big\lceil\frac{k-1}{2}\big\rceil},
%\eeqn
Using Theorem~\ref{thm:main}, we widen the  range of~$p$ for which~\eqref{eq:uppertail} can be shown to hold for all $k\geq 5$. %, matching the bounds of~\cite{Bhattacharya:2016} for $k = 3,4$ and slightly improving them for~$k \geq 5$.

\begin{theorem}\label{thm:uppertail}
For every integer $k \geq 3$ and
\beqn
c_k = \frac{1}{6k\big\lceil\frac{k-1}{2}\big\rceil},
\eeqn
the estimate~\eqref{eq:uppertail} holds true, provided $p \geq N^{-c_k}(\log N)$ and~$N$ is prime.
\end{theorem}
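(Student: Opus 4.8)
The plan is to plug Theorem~\ref{thm:main} into the machinery of Bhattacharya, Ganguly, Shao and Zhao~\cite{Bhattacharya:2016}, whose estimate~\eqref{eq:uppertail} is conditional on a Gaussian-width bound for a specific point set associated with the $k$-AP hypergraph on $\Z/N\Z$. Concretely, the nonlinear LDP of Chatterjee--Dembo / Eldan reduces the validity of~\eqref{eq:uppertail} to a ``low-complexity gradient'' condition, which in~\cite{Bhattacharya:2016} is verified by controlling $\gw(\nabla f(\bset{n}))$, where $f$ is the multilinear polynomial counting $k$-APs: $f(x) = \sum_{a,r} x_a x_{a+r}\cdots x_{a+(k-1)r}$, a homogeneous degree-$k$ form with $0$-$1$ coefficients on $n = N$ variables. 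The partial derivative $\partial_{x_j} f$ is itself a multilinear polynomial of degree $k-1$ with $0$-$1$ coefficients, and the relevant point set is $\psi(\bset{n})$ for $\psi = (\partial_{x_1} f, \dots, \partial_{x_n} f)$, so here $d = k-1$ and the number of output coordinates is $k = n = N$. First I would verify that each $\partial_{x_j} f$ has multiplicity $t = O_k(1)$: a fixed variable $x_i$ appears in a monomial of $\partial_{x_j} f$ only if both $i$ and $j$ lie in a common $k$-AP, and the number of $k$-APs through two fixed points is $O(k^2)$, so $t \lesssim k^2$.

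Next I would apply Theorem~\ref{thm:main} with these parameters. It gives
\begin{align*}
\gw\big(\psi(\bset{N})\big) \lesssim_k N \cdot N^{\frac12 + \frac12\left(1 - \frac{1}{\ceil{(k-1)/2}}\right)}\sqrt{\log N} = N^{\frac32 - \frac{1}{2\ceil{(k-1)/2}}}\sqrt{\log N},
\end{align*}
taking $k = N$ inside the square root (the $\sqrt{k}$ factor contributes $N^{1/2}$). I would then trace through the quantitative threshold in~\cite{Bhattacharya:2016}: their argument requires the Gaussian width (suitably normalized by the $\ell_2$-diameter of the set and by $\sqrt{\delta}\,p^{k/2}N$, the target rate) to be $o\big(\sqrt{\delta}\,p^{k/2}N \cdot (\log(1/p))^{1/2} \cdot \text{(normalization)}\big)$. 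Substituting the improved width and solving the resulting inequality for $p$ yields a threshold of the form $p \geq N^{-c_k}\,\text{polylog}(N)$; the exponent improves from $c_k = \frac{1}{6k(k-1)}$ to $c_k = \frac{1}{6k\ceil{(k-1)/2}}$ precisely because $\ceil{(k-1)/2}$ replaces $k-1$ in the exponent of $N$ coming from the width bound — the rest of the dependence on $k$ (the constant $6k$) is inherited unchanged from their computation. For odd $k \geq 5$ this is a genuine factor-$2$ (``quadratic'') improvement in the exponent; for $k = 3$ one has $\ceil{1}= 1 = k-1$ and the bound coincides with $c_3 = 1/18$, consistent with the statement covering all $k \geq 3$.

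The main obstacle is not the application of Theorem~\ref{thm:main} itself but the bookkeeping needed to confirm that the only place the Gaussian-width bound enters the analysis of~\cite{Bhattacharya:2016} is through an inequality of exactly the shape above, with the same constants, so that swapping in our stronger bound is a drop-in replacement that changes nothing except the final exponent. In particular I would need to check: (i) that the normalization by the set's diameter and by the spectral/combinatorial parameters of the $k$-AP hypergraph is identical to theirs (our $nt$ scaling in Theorem~\ref{thm:main} matches the natural $[0,nt]$ range of each coordinate, which is what they use); (ii) that the polylogarithmic slack in their threshold is at least a single $\log N$ factor, so that our extra $\sqrt{\log n}$ is absorbed and the clean statement ``$p \geq N^{-c_k}\log N$'' holds; and (iii) that nothing in their reduction is special to the case $k-1$ being even or odd. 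Step (iii) is where a subtlety could hide, since the ceiling in Theorem~\ref{thm:main} comes from the reduction to even-degree homogeneous maps, but since we are only using Theorem~\ref{thm:main} as a black box and $\partial_{x_j} f$ is already homogeneous of degree $k-1$, there is no interaction — the ceiling is simply what the theorem delivers for $d = k-1$. I expect the write-up to be short: state the parameters, cite the conditional estimate from~\cite{Bhattacharya:2016}, invoke Theorem~\ref{thm:main}, and do the one-line arithmetic to extract $c_k$.
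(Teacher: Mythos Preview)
Your approach is correct and is exactly the paper's: apply Theorem~\ref{thm:main} to the gradient map $\psi=\nabla\Lambda_k$ (the paper packages this as Corollary~\ref{cor:gw_H}), use the $O(k^2)$ bound on the number of $k$-APs through any fixed pair of points to bound the multiplicity, and then plug $\sigma=\frac{1}{2\lceil(k-1)/2\rceil}$, $\tau=\tfrac12$ into the conditional BGSZ estimate (stated in the paper as Theorem~\ref{thm:BGSZ}). One arithmetic slip to fix: in your displayed bound the total exponent of~$N$ should be $2-\frac{1}{2\lceil(k-1)/2\rceil}$, not $\frac32-\frac{1}{2\lceil(k-1)/2\rceil}$ (you dropped a~$\tfrac12$ when combining $1+\tfrac12+\tfrac12(1-\cdots)$); after dividing by~$N$ to match the normalization in Theorem~\ref{thm:BGSZ} this gives the correct~$\sigma$, so your final~$c_k$ is unaffected.
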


%The mapping~$F$ that is relevant here is the standard multilinear form~$\Lambda_k$ such that $\Lambda_k(1_A)$ counts the number of $k$-term arithmetic progressions in a set~$A\subseteq[N]$.
%Our bounds on the Gaussian width apply more generally to mappings~$F$ given in terms of a hypergraph where every two vertices are contained in a constant number of edges. %(Proposition~\ref{prop:gw_H} below).
%But to the best of our knowledge the variational problem has yet not been solved in this setting.

%
\subsection{Locally decodable codes}

There is a close connection between the Gaussian widths considered in Theorem~\ref{thm:main} and special error-correcting codes called \emph{locally decodable codes} (LDCs).
A map $C:\bset{k}\to\bset{n}$ is a $q$-query LDC if for every $i\in[k]$ and $x\in\bset{k}$, the value~$x_i$ can be retrieved by reading at most~$q$ coordinates of the codeword~$C(x)$, even if the codeword is corrupted in a not too large (but possibly constant) fraction of coordinates.
A main open problem is to determine the smallest possible codeword length~$n$ as a function of the message length~$k$, when~$q$ is a fixed constant.
Currently this problem is settled only in the cases~$q = 1,2$~\cite{Katz:2000,Kerenidis:2004,Goldreich:2006} and remains wide open for the case~$q =3$.
We refer to the extensive survey~\cite{Yekhanin:2012} for more information on this problem.
A connection with Gaussian width was established by the authors and Dvir in~\cite{BDG:2017}, where we show that $q$-query LDCs from $\bset{\Omega(k)}$ to $\bset{O(n)}$ are equivalent to mappings $\psi:\R^n\to\R^k$ whose coordinates are degree-$q$, multiplicity-1 polynomials with 0-1 coefficients that are supported by $\Omega(n)$ monomials, and such that the set $\psi(\bset{n})$ has Gaussian width~$\Omega(k)$.
It was observed there that the best-known lower bounds on the length~$n = n(k)$ of $q$-query LDCs---proved using techniques from quantum information theory~\cite{Kerenidis:2004}---imply a slightly different but equivalent version of Theorem~\ref{thm:intersective} (see Section~\ref{sec:intersective}).
The proof of Theorem~\ref{thm:main} is based on ideas from~\cite{Kerenidis:2004}, but does not use quantum information theory.\footnote{Not surprisingly, the LDC lower bounds of~\cite{Kerenidis:2004} are also implied by Theorem~\ref{thm:main}.}

\subsection{Gaussian width bounds from type constants}
We observe that the Gaussian width in Theorem~\ref{thm:main} can be bounded in terms of type constants of certain Banach spaces. Unfortunately, we do not have good enough bounds on the type constants of the required spaces to improve Theorem~\ref{thm:main}. But we hope that this connection will motivate progress on understanding these spaces. 

%Let us start by defining type constants.
A Banach space~$X$ is said to have (Rademacher) type~$p> 0$	 if there exists a constant $T< \infty$ such that for every~$k$ and $x_1,\dots,x_k\in X$,
\beq\label{eq:type}
\E_{\eps}\Big\|\sum_{i=1}^k \eps_ix_i\Big\|_X^p \le T^p\sum_{i=1}^k \norm{x_i}_X^p,
\eeq
where the expectation is over a uniformly random $\eps=(\eps_1,\dots,\eps_k)\in \{-1,1\}^k$.
The smallest~$T$ for which~\eqref{eq:type} holds is referred to as the type-$p$ constant of~$X$, denoted~$T_p(X)$.
Type, and its dual notion cotype, play an important role in Banach space theory as they are tightly linked to local geometric properties (we refer to~\cite{Lindenstrauss-Tzafriri2} and~\cite{Maurey:2003} for extensive surveys).
%If there is no such constant then~$X$ is said to fail type~$q$.
Some fundamental facts are as follows.
It follows from the triangle inequality that every Banach space has type~1 and from the Kahane--Khintchine inequality that no Banach space has type $p>2$.
The parallelogram law implies that Hilbert spaces have type~2.
An easy but important fact is that $\ell_1$ fails to have type~$p>1$.
Indeed, a famous result of Maurey and Pisier~\cite{Maurey:1973} asserts that a Banach space fails to have type $p>1$ if and only if it contains~$\ell_1$ uniformly.
Finite-dimensional Banach spaces have type-$p$ for all $p\in [1,2]$.

Of importance to Theorem~\ref{thm:main} are the actual type constants~$T_p(X)$ of a certain family of finite-dimensional Banach spaces.
Let $r_1,\dots,r_d\ge 1$ be such that $\sum_{i=1}^d \frac{1}{r_i}=1$ and let $\mathcal L_{r_1,\dots,r_d}^n$ be the space of $d$-linear forms on~$\R^n\times\cdots\times \R^n$ ($d$ times) endowed with the norm
\beqn
\norm{\Lambda} = \sup\Big\{\frac{|\Lambda(x_1,\dots,x_d)|}{\|x_1\|_{\ell_{r_1}}\cdots\|x_d\|_{\ell_{r_d}}} \st x_1,\dots,x_d\in \R^n\setminus\{0\}\Big\}.
\eeqn
This space is also known as the injective tensor product of $\ell_{s_1}^n,\dots,\ell_{s_d}^n$ for $r_i^{-1} + s_i^{-1} = 1$ and as such plays an important role in the theory of tensor products of Banach spaces~\cite{Rya02}.
The relevance of the type constants of this space to Theorem~\ref{thm:main} is captured by the following lemma, proved in Section~\ref{sec:type}.
%Define the Banach space $X$ as the space\footnote{$X$ is the injective tensor product space $\ell^n_{r_1^*}\otimes_\eps \ell^n_{r_2^*} \otimes_\eps \cdots \otimes_\eps \ell^n_{r_s^*}$ where $r_i^*$'s are such that $1/r_i + 1/{r_i^*}=1.$} of $s$-multilinear forms acting on $s$ vectors in $\R^n$ with the following norm: $$\norm{\Lambda}_X = \sup_{x_i\in B_{\ell^n_{r_i}}} \Lambda(x_1,\dots,x_s)$$ where $B_{\ell^n_{r_i}}$ is the unit ball of the $\ell_{r_i}$ norm in $\R^n$. 
%For simplicity, we will consider only $s$-uniform hypergraphs.

\begin{lemma}\label{lem:type_constants}
%Let~$d,k,n, t$ be positive integers and 
Let $\psi:\R^n\to\R^k$ be a polynomial mapping such that each coordinate is multilinear and has 0-1 coefficients, degree at most~$d$ and multiplicity~$t$.
%$H_1,\dots,H_k$ be $s$-hypergraphs with vertex set~$[n]$. 
Then for any $r_1,\dots,r_d \geq 1$ such that $\sum_{i=1}^d \frac{1}{r_i} = 1$ and any $p\in [1,2]$,
\begin{align*}
\gw\big( \psi(\bset{n})\big)
\lesssim_d nt\, T_p(\mathcal L_{r_1,\dots,r_d}^n)\,k^{1/p}.
\end{align*}
%where $\psi_{H_1,\dots,H_k}$ is defined as in~(\ref{eq:psidef}). % and $X$ is the space of $s$-multilinear forms with norm defined as above.
\end{lemma}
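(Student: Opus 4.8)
The plan is to realize the functional $x \mapsto \langle \psi(x), g\rangle$ on $\bset n$ as a single $d$-linear form evaluated on the diagonal, and then to invoke the type-$p$ inequality to control the resulting random norm. Fix an admissible tuple $r_1,\dots,r_d \ge 1$ with $\sum_l r_l^{-1} = 1$ and write $X = \mathcal L_{r_1,\dots,r_d}^n$. Since a constant term in some $\psi_i$ merely translates $\psi(\bset n)$ and hence does not change its Gaussian width, I may assume each $\psi_i = \sum_S x_S$ has only monomials $S$ with $1 \le |S| \le d$. To each coordinate I would associate a $d$-linear form $\Phi_i$ on $(\R^n)^d$ by routing each monomial's variables into the $d$ slots: writing $S = \{j_1 < \dots < j_m\}$ in increasing order, send $j_1,\dots,j_{m-1}$ to slots $1,\dots,m-1$ and the largest variable $j_m$ to every remaining slot $m,m+1,\dots,d$, i.e.\ $\Phi_i(y^{(1)},\dots,y^{(d)}) = \sum_S y^{(1)}_{j_1}\cdots y^{(m-1)}_{j_{m-1}}\, y^{(m)}_{j_m}\cdots y^{(d)}_{j_m}$. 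This is genuinely linear in each argument, and because $x_j^a = x_j$ for $x \in \bset n$ it satisfies $\Phi_i(x,\dots,x) = \psi_i(x)$ for every $x \in \bset n$. Unlike the naive homogenization by dummy variables, this routing does not inflate the multiplicity, which is what makes the next step go through.

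The first key estimate is $\|\Phi_i\|_X \le t$, uniformly over all admissible $(r_1,\dots,r_d)$. Letting $\pi_l(S) \in S$ denote the variable of $S$ routed to slot $l$ and writing $a^{(l)}_v = |y^{(l)}_v|$ (we may take $\|a^{(l)}\|_{\ell_{r_l}} = 1$), the weighted AM--GM inequality gives, monomial by monomial, $\prod_{l=1}^d a^{(l)}_{\pi_l(S)} \le \sum_{l=1}^d r_l^{-1}\big(a^{(l)}_{\pi_l(S)}\big)^{r_l}$, whence $|\Phi_i(y^{(1)},\dots,y^{(d)})| \le \sum_l r_l^{-1}\sum_S \big(a^{(l)}_{\pi_l(S)}\big)^{r_l}$. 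Since $\pi_l(S) \in S$ and, by the multiplicity hypothesis, each variable lies in at most $t$ monomials, the inner sum equals $\sum_v (a^{(l)}_v)^{r_l}\,|\{S : \pi_l(S) = v\}| \le t\sum_v (a^{(l)}_v)^{r_l} = t$, and summing the weights $r_l^{-1}$ to $1$ yields $\|\Phi_i\|_X \le t$. (A slot with $r_l = \infty$ is simply dropped, using $a^{(l)}_{\pi_l(S)} \le 1$.)

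With this in hand, using $\|x\|_{\ell_{r_l}} \le n^{1/r_l}$ for $x \in \bset n$ and setting $\Phi_g := \sum_{i=1}^k g_i\Phi_i$, I obtain $\langle \psi(x), g\rangle = \Phi_g(x,\dots,x)$ and hence $\sup_{x \in \bset n}\langle \psi(x), g\rangle \le \|\Phi_g\|_X \prod_l n^{1/r_l} = n\|\Phi_g\|_X$, so $\gw(\psi(\bset n)) \le n\, \E_g\big\|\sum_i g_i\Phi_i\big\|_X$. To bound the last expectation I would write $g_i = \eps_i|g_i|$ with $\eps$ uniform on $\pmset k$ and independent of $(|g_i|)$, apply the type-$p$ inequality~\eqref{eq:type} conditionally on $(|g_i|)$ to get $\E_\eps\big\|\sum_i \eps_i|g_i|\Phi_i\big\|_X \le T_p(X)\big(\sum_i |g_i|^p\|\Phi_i\|_X^p\big)^{1/p}$, then take expectations using concavity of $u \mapsto u^{1/p}$ (Jensen), the bound $(\E|g_1|^p)^{1/p} \le 1$ for $p \in [1,2]$, and $\|\Phi_i\|_X \le t$. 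This gives $\E_g\big\|\sum_i g_i\Phi_i\big\|_X \le T_p(X)\, t\, k^{1/p}$, and therefore $\gw(\psi(\bset n)) \lesssim nt\, T_p(X)\, k^{1/p}$.

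I expect the crux to be the construction in the first two paragraphs: producing a single $d$-linear form whose diagonal restriction to $\bset n$ equals $\psi_i$ exactly, while keeping its $\mathcal L_{r_1,\dots,r_d}^n$-norm at most $t$ for \emph{every} admissible choice of exponents --- in particular handling monomials of degree strictly below $d$ without spoiling the counting bound. The ``extra slots go to the largest variable'' routing together with the weighted AM--GM/multiplicity count is the heart of the matter; the passage from Gaussian to Rademacher averages and the appeal to the type constant are routine.
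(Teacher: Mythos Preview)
Your proof is correct and follows the same overall architecture as the paper's: realise each $\psi_i$ as the diagonal of a $d$-linear form in $\mathcal L_{r_1,\dots,r_d}^n$, bound $\sup_{x\in\bset{n}}\langle\psi(x),g\rangle$ by $n\|\sum_i g_i\Phi_i\|$, pass from Gaussians to Rademachers by symmetry, and invoke the type-$p$ inequality together with $\E\|g\|_{\ell_p}\le k^{1/p}$.

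The construction and the norm estimate, however, differ from the paper's. The paper first reduces to $d$-\emph{uniform} hypergraphs by the dummy-variable homogenisation of Proposition~\ref{prop:homogeneous} (paying a factor depending only on $d$ in $n$), takes the natural multilinear extension $\Lambda_{H_i}$, and bounds $\|\Lambda_{H_i}\|$ by decomposing $H_i$ into at most $d\Delta(H_i)$ matchings via Lemma~\ref{lem:chromatic_number}, noting that a matching form has norm $\le 1$ by H\"older. Your routing trick---sending the largest index of each monomial to all surplus slots---handles monomials of every degree $\le d$ simultaneously and dispenses with the homogenisation step. Your norm bound via weighted AM--GM plus the observation $\{S:\pi_l(S)=v\}\subseteq\{S:v\in S\}$ is more direct than the matching decomposition and even yields the sharper constant $\|\Phi_i\|\le t$ rather than $\le dt$. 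The paper's route is more modular (it reuses the edge-colouring lemma already needed elsewhere), while yours is self-contained and slightly tighter.
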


Observe that the space $\mathcal L_{2,2}^n$ may be identified with the space of $n\times n$ matrices endowed with the spectral norm (or operator norm).
A key ingredient in the proof of Theorem~\ref{thm:main}, Theorem~\ref{thm:TJ} below, easily implies that the type-2 constant of this space is of order~$O(\sqrt{\log n})$.
A well-known lower bound of the same order follows for instance from the connection between Gaussian width and LDCs and a basic construction of a 2-query LDC known as the Hadamard code.
More generally, lower bounds on the type constants of~$\mathcal L_{r_1,\dots,r_d}^n$ are implied by $d$-query LDCs~\cite{BNR:2012, Briet:2016}.

\section{Proof of Theorem~\ref{thm:main}}
\label{sec:proof}
In this section we prove Theorem~\ref{thm:main}.
We begin by giving a high-level overview of the ideas.
The main tool we use is the following random matrix inequality, which is a special case of a non-commutative version of the Khintchine inequality due to Tomczak-Jaegermann~\cite[Theorem~3.1]{Tomczak-Jaegermann:1974}.
Let~$\langle\cdot,\cdot\rangle$ be the standard inner product on~$\R^N$ and denote by $B_2^N$ the Euclidean unit ball in~$\R^N$.
Given a matrix~$A\in \R^{N\times N}$, its operator norm (or spectral norm) is given by $\|A\| = \sup\{|\langle Ax,y\rangle| \st x,y\in B_2^N\}$.

\begin{theorem}[Tomczak-Jaegermann]\label{thm:TJ}
There exists an absolute constant~$C \in (0,\infty)$ such that the following holds.
Let~$A_1,\dots,A_k \in \R^{N\times N}$ be a collection of matrices and
let $g_1,\dots, g_k$ be independent Gaussian random variables with mean zero and variance~1.
Then,
\beqn
\Exp\Big[\Big\|\sum_{i=1}^k g_i A_i\Big\|\Big]
\leq 
C\sqrt{\log N}
\Big(\sum_{i=1}^k \|A_i\|^2\Big)^{1/2}.
\eeqn
\end{theorem}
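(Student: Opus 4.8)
The plan is to establish Theorem~\ref{thm:TJ} by the matrix Laplace‑transform method of Ahlswede--Winter, using the Golden--Thompson inequality $\Tr\,e^{X+Y}\le\Tr\big(e^Xe^Y\big)$ (for symmetric $X,Y$) as the only nonelementary input. The first step is a reduction to symmetric matrices via the \emph{Hermitian dilation}: for each $A_i$ put
\beqn
\tilde A_i=\begin{pmatrix}0 & A_i\\ A_i^{\top} & 0\end{pmatrix}\in\R^{2N\times 2N},
\eeqn
which is symmetric, has $\|\tilde A_i\|=\|A_i\|$, and satisfies $\big\|\sum_i g_i\tilde A_i\big\|=\big\|\sum_i g_iA_i\big\|$. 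Since for symmetric $S$ one has $\|S\|=\max\{\lambda_{\max}(S),\lambda_{\max}(-S)\}$ and $-\sum_i g_i\tilde A_i$ has the same law as $\sum_i g_i\tilde A_i$, it suffices to bound $\Exp\big[\lambda_{\max}\big(\sum_i g_iB_i\big)\big]$ for symmetric $B_1,\dots,B_k\in\R^{m\times m}$, with $m=2N$, by $\sqrt{2\log m}\,\big(\sum_i\|B_i\|^2\big)^{1/2}$; replacing $N$ by $2N$ costs only a constant factor since $\log(2N)\le 2\log N$ for $N\ge 2$ (and bounded $N$ is trivial, reading $\sqrt{\log N}$ as a fixed constant there).

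Second, I would bound the matrix moment generating function. Diagonalizing a fixed symmetric $B$ gives the exact identity $\Exp_g\,e^{\theta gB}=e^{\theta^2B^2/2}$, a positive semidefinite matrix whose top eigenvalue is $e^{\theta^2\|B\|^2/2}$. To combine the independent terms, write $S_j=\sum_{i\le j}g_iB_i$ and peel off one summand at a time: conditioning on $g_1,\dots,g_{j-1}$ and using Golden--Thompson, the linearity of $\Tr\big(e^{\theta S_{j-1}}\,\cdot\,\big)$, and the inequality $\Tr(PQ)\le\lambda_{\max}(P)\,\Tr Q$ for positive semidefinite $P,Q$,
\beqn
\Exp_{g_j}\Tr\,e^{\theta S_j}\le\Tr\big(e^{\theta S_{j-1}}\,\Exp_{g_j}e^{\theta g_jB_j}\big)=\Tr\big(e^{\theta S_{j-1}}e^{\theta^2B_j^2/2}\big)\le e^{\theta^2\|B_j\|^2/2}\,\Tr\,e^{\theta S_{j-1}}.
\eeqn
Taking expectations and iterating down to $S_0=0$ (with $\Tr e^{0}=m$) yields $\Exp\,\Tr\,e^{\theta\sum_i g_iB_i}\le m\exp\big(\tfrac{\theta^2}{2}\sum_i\|B_i\|^2\big)$; note this route conveniently produces the ``sum of squared norms'' quantity that appears on the right‑hand side of the theorem, rather than $\big\|\sum_i B_i^2\big\|$.

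Third, a standard Laplace‑transform step finishes the argument. With $\sigma^2=\sum_i\|B_i\|^2$ and $S=\sum_i g_iB_i$, Jensen's inequality together with $\lambda_{\max}(e^{\theta S})\le\Tr e^{\theta S}$ gives, for every $\theta>0$,
\beqn
e^{\theta\,\Exp\lambda_{\max}(S)}\le\Exp\,e^{\theta\lambda_{\max}(S)}=\Exp\,\lambda_{\max}\big(e^{\theta S}\big)\le\Exp\,\Tr\,e^{\theta S}\le m\,e^{\theta^2\sigma^2/2},
\eeqn
so $\Exp\lambda_{\max}(S)\le\frac{\log m}{\theta}+\frac{\theta\sigma^2}{2}$, and the choice $\theta=\sqrt{2\log m}/\sigma$ gives $\Exp\lambda_{\max}(S)\le\sigma\sqrt{2\log m}$. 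Unwinding the dilation ($\sigma^2=\sum_i\|A_i\|^2$, $m=2N$) yields $\Exp\big\|\sum_i g_iA_i\big\|\le\sqrt{2\log(2N)}\big(\sum_i\|A_i\|^2\big)^{1/2}\le C\sqrt{\log N}\big(\sum_i\|A_i\|^2\big)^{1/2}$ for an absolute constant $C$.

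The only substantive ingredient, hence the place requiring care, is the peeling step: this is where non‑commutativity of the $B_i$ enters and where Golden--Thompson is essential. Two alternatives are worth recording. One replaces Golden--Thompson by Lieb's concavity theorem (that $X\mapsto\Tr\exp(H+\log X)$ is concave on the positive‑definite cone), applied through Jensen directly to the independent sum. The other, closer to Tomczak--Jaegermann's original argument but heavier, passes to the Schatten norm $\|\cdot\|_{S_{2p}}$ with $p\asymp\log N$, using $\|A\|\le\|A\|_{S_{2p}}\le N^{1/2p}\|A\|$ together with the non‑commutative Khintchine inequality in Schatten norms.
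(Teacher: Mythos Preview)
The paper does not give its own proof of Theorem~\ref{thm:TJ}; it is quoted as a special case of Tomczak--Jaegermann's result~\cite[Theorem~3.1]{Tomczak-Jaegermann:1974} and used as a black box. So there is nothing in the paper to compare your argument against directly.

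Your proof is correct and self-contained. The only place to tighten is the sentence ``it suffices to bound $\Exp[\lambda_{\max}(\sum_i g_iB_i)]$'': as written you bound $\Exp\lambda_{\max}(S)$, not $\Exp\|S\|$, and passing from one to the other is not literally $\Exp\|S\|\le 2\Exp\lambda_{\max}(S)$ (the latter can be negative). The clean fix is to stay at the level of the moment generating function: from $\Exp\,e^{\theta\lambda_{\max}(S)}\le m\,e^{\theta^2\sigma^2/2}$ and the identical bound for $-S$ you get $\Exp\,e^{\theta\|S\|}\le 2m\,e^{\theta^2\sigma^2/2}$, then apply Jensen and optimise in~$\theta$. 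This only changes $m$ to $2m$ inside the logarithm and is absorbed in~$C$.

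Regarding method: your route is the modern matrix--Chernoff/Ahlswede--Winter argument, with Golden--Thompson as the sole nontrivial input; it is short and yields exactly the quantity $\sum_i\|A_i\|^2$ needed here. Tomczak--Jaegermann's original proof (which you allude to in your last paragraph) goes instead through Schatten classes: one works in $S_{2p}$ with $p\asymp\log N$, uses that $\|A\|\le\|A\|_{S_{2p}}\le e\|A\|$ at that choice of~$p$, and bounds $\Exp\|\sum_i g_iA_i\|_{S_{2p}}^{2p}$ by a moment computation. Both give the same $\sqrt{\log N}$ factor; yours is more elementary, while the Schatten-norm approach fits naturally into the type/cotype framework the paper invokes in Section~\ref{sec:type}.
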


This result already suffices to prove Theorem~\ref{thm:main} when the coordinate mappings~$\psi_i$ are quadratic forms, in which case there exist matrices $A_i\in \bset{n\times n}$ such that $\psi_i(x) = \langle A_ix,x\rangle$.
The assumption that each~$\psi_i$ has multiplicity~$t$ implies that each row and column of~$A_i$ has at most $t$ ones.
This in turn implies that $\|A_i\| \leq t$ by a Birkhoff–-von Neumann-type theorem.
Since each $x\in \bset{n}$ has Euclidean norm at most~$\sqrt{n}$, we get
\begin{align*}
\gw\big(\psi(\bset{n})\big)
&=
\Exp\Big[\max_{x\in\bset{n}}\sum_{i=1}^k g_i\langle A_ix,x\rangle\Big] \\
&=\Exp\Big[\max_{x\in \bset{n}}\Big\langle\Big(\sum_{i=1}^kg_iA_i\Big)x, x\Big\rangle\Big]\leq
n\Exp\Big[\Big\|\sum_{i=1}^k g_iA_i\Big\|\Big].
\end{align*}
By Theorem~\ref{thm:TJ}, the above is at most $Ctn\sqrt{k\log n}$.
\medskip

The general case is proved via a reduction to the above quadratic case and consists of two steps.
In the first step, we reduce to the case where each coordinate~$\psi_i$ is a homogeneous polynomial of degree~$2\ceil{d/2}$.
This is done in a straightforward way by adding at most~$dn$ variables in such a way so as to preserve the multiplicity.
The second step consists of a reduction to the quadratic case.
For this, it will be convenient to consider the hypergraphs associated with the monomial support of the coordinate mappings~$\psi_i$.

Recall that an $d$-hypergraph~$H= (V,E)$ consists of a vertex set~$V$ and a multiset~$E$, also denoted~$E(H)$, of subsets of~$V$ of size at most~$d$, called the edges.
A hypergraph is $d$-uniform if each edge has size exactly~$d$.
%All $n$-vertex hypergraphs will be assumed to have $[n] = \{1,\dots,n\}$ as the vertex set~$V$.
The degree of a vertex is the number of edges containing it and the degree of~$H$, denoted~$\Delta(H)$, is the maximum degree among its vertices. 
A \textit{matching} is a hypergraph where no two edges intersect.
Associate with a hypergraph $H = ([n], E)$, the multilinear polynomial $p_H\in \R[x_1,\dots,x_n]$ given by 
\beq\label{eq:pHdef}
p_H(x_1,\dots,x_n)=\sum_{e\in E} \prod_{i \in e} x_i.
\eeq
%\end{definition}
The multiplicity of~$p_H$ is then exactly the degree~$\Delta(H)$.
Clearly the coordinate mappings~$\psi_i$ of the form featuring in Theorem~\ref{thm:main} can be written as~$p_H$ for some $d$-hypergraph~$H$ of degree at most~$t$.
%(The factor $\tfrac{1}{n}$ in~\eqref{eq:pHdef} serves to suppress some factors of~$n$ later on.\jnote{Is this really that useful? I wonder if it causes more confusion actually.})
The reduction to the quadratic case is based on the following key lemma, in which for $x\in \R^n$ and $m\in \N$, the the $m$th tensor power~$x$ is defined as $x^{\otimes m} = (\prod_{i=1}^m x_{u_i})_{u\in[n]^m}$.

\begin{lemma}[Matrix lemma]\label{lem:main}
For every~$r\in\N$ there exist a $C_r,c_r \in (0,\infty)$ and $n_0(r)\in \N$ such that the following holds.
Let $n \geq n_0(r)$, $m = C_rn^{1 - 1/r}$ and $N = n^m$.
Let $H = ([n], E)$ be a $2r$-uniform hypergraph and let $p_H$ be the polynomial as in~\eqref{eq:pHdef}.
Then, there exists a matrix $A\in \R^{N\times N}$ such that $\|A\| \lesssim_r \Delta(H)$ and for every $x\in\pmset{n}$,
\beqn
p_H(x)
=
\frac{n}{c_r N}\langle Ax^{\otimes m}, x^{\otimes m}\rangle.
\eeqn
Moreover,~$A$ is the adjacency matrix of a graph (with possible parallel edges).
\end{lemma}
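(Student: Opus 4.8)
The plan is to construct the matrix $A$ explicitly as the adjacency matrix of a multigraph on the index set $[n]^m$, and to read off the quadratic-form identity from the behaviour of the monomials $\prod_{i\in e}x_i$ under the substitution $x_i^2=1$ that holds on $\pmset n$.

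First I would record the basic dictionary. For $x\in\pmset n$ and $u=(u_1,\dots,u_m)\in[n]^m$ one has $(x^{\otimes m})_u=\prod_{j=1}^m x_{u_j}=\chi_{P(u)}(x)$, where $\chi_S(x):=\prod_{i\in S}x_i$ and $P(u)\subseteq[n]$ is the set of coordinates occurring an odd number of times in $u$. Hence for any symmetric nonnegative $A\in\R^{N\times N}$,
\[
\inpro{Ax^{\otimes m}}{x^{\otimes m}}=\sum_{S\subseteq[n]}\Big(\sum_{u,v\,:\,P(u)\bigtriangleup P(v)=S}A_{u,v}\Big)\,\chi_S(x),
\]
so, since $\{\chi_S\}_S$ is a basis, the asserted identity $p_H(x)=\tfrac{n}{c_rN}\inpro{Ax^{\otimes m}}{x^{\otimes m}}$ is equivalent to a single coefficient condition: for every $S$, the quantity $\sum_{u,v:\,P(u)\bigtriangleup P(v)=S}A_{u,v}$ must equal $\tfrac{c_rN}{n}$ times the multiplicity of $S$ in $E(H)$ (in particular it must vanish for $S\notin E(H)$). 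The mechanism that makes this achievable with a matrix of small operator norm is that $P(u)\bigtriangleup P(v)$ is blind to coordinates of even total multiplicity in $u$ and $v$: a common ``core'' in $u$ and $v$ is invisible, and exploiting this redundancy is what forces $m$ to grow.

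Second, the construction. Fix for each $e\in E(H)$ a balanced partition $e=A_e\sqcup B_e$ with $|A_e|=|B_e|=r$. For each $e$, each $(m-r)$-element core $W\subseteq[n]\setminus e$ (for $n\ge n_0(r)$ one has $m\le n-2r$, so such $W$ abound), and each ordered pair of rainbow strings $u,v\in[n]^m$ with entry-sets $\{u\}=A_e\cup W$ and $\{v\}=B_e\cup W$, place $\mu$ parallel edges $\{u,v\}$, for a parameter $\mu\in\N$ to be chosen; let $A$ be the adjacency matrix of the resulting multigraph. Because $\chi_{A_e\cup W}\chi_{B_e\cup W}=\chi_{A_e}\chi_{B_e}\chi_W^2=\chi_e$, a direct count gives
\[
\inpro{Ax^{\otimes m}}{x^{\otimes m}}=2\mu\,(m!)^2\binom{n-2r}{m-r}\,p_H(x)\qquad(x\in\pmset n).
\]
Now set $m=C_rn^{1-1/r}$ and choose $\mu$ (rounding, and adjusting the constant $c_r$) so that $2\mu(m!)^2\binom{n-2r}{m-r}=\tfrac{c_rN}{n}=c_rn^{m-1}$; this is the required identity. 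The exponent $1-1/r$ is visible here: using $m!\binom{n-2r}{m-r}\approx\tfrac{m!}{(m-r)!}\,n^{m-r}\approx m^r n^{m-r}$, one gets $\mu\,m!\approx c_rn^{r-1}/m^r$, which is $O_r(1)$ \emph{exactly} when $m\gtrsim_r n^{1-1/r}$; with a smaller $m$ the construction would force $\mu\,m!$, hence the operator norm below, to grow polynomially in $n$.

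The remaining and hardest step is the bound $\norm A\lesssim_r\Delta(H)$. Were the rank-$2$ pieces $\mathbf 1_{\{P(u)=A_e\cup W\}}\mathbf 1_{\{P(v)=B_e\cup W\}}^{\top}+(\text{transpose})$ supported on pairwise disjoint index sets, the estimate would be immediate and give $\norm A=\mu\,m!=O_r(1)$; but distinct $(e,W)$ and $(e',W')$ can produce the same entry-set $A_e\cup W=B_{e'}\cup W'$, so $A$ is not block diagonal, and these overlaps are precisely where $\Delta(H)$ must enter. I expect this to be the main obstacle. One route is to precede the construction by a proper edge-colouring of $H$ into $O_r(\Delta(H))$ matchings and run the construction on each colour class separately, so that within a class the pieces are (essentially) disjointly supported with norm $O_r(1)$ and $\norm A\le\sum_{\text{colours}}\norm{A_{\text{colour}}}\lesssim_r\Delta(H)$; a second, more hands-on route is to bound $\norm A$ directly by a moment estimate, controlling $\Tr(A^{2\ell})$ via a count of closed walks and using the degree bound together with the fact that a rainbow entry-set meets $V(H)$ in at most $m$ vertices to limit how a walk can be extended from a given string --- this is the route closest in spirit to the quantum locally-decodable-code argument of~\cite{Kerenidis:2004} that the paper dequantizes, and it may require refining the construction (e.g.\ averaging over the partitions $e=A_e\sqcup B_e$, or restricting the admissible cores) to make the walk count manageable. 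The remaining bookkeeping --- keeping $\mu$ a positive integer, carrying multiplicities of $E(H)$ through the coefficient condition, and using $\norm{x^{\otimes m}}^2=n^m=N$ for $x\in\pmset n$ --- is routine.
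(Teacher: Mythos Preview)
Your construction has a genuine gap, and it appears earlier than the step you flag as hardest. You pair \emph{every} rainbow string $u$ with entry-set $A_e\cup W$ to \emph{every} rainbow string $v$ with entry-set $B_e\cup W$; for fixed $(e,W)$ this is a complete bipartite piece $K_{m!,m!}$, whose adjacency matrix already has norm $\mu\, m!$. So even in the ideal disjointly-supported case you get $\|A\|=\mu\, m!$, and your own computation forces $\mu\, m!\approx c_r n^{r-1}/m^r$, which is indeed $O_r(1)$ when $m\asymp n^{1-1/r}$. But $\mu$ is supposed to be a positive integer, so $\mu\, m!\ge m!$, and for $r\ge 2$ this is super-polynomial in~$n$. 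Edge-colouring $H$ first does not help: this lower bound comes from a single $(e,W)$-block, not from overlaps between blocks, so passing to matchings leaves each block intact. (If instead you set $\mu=1$ and push the discrepancy into~$c_r$, then $c_r\asymp m!$ depends on~$n$, violating the statement.)

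The paper tightens the pairing so that the degree is bounded. Instead of requiring only that $u$ and $v$ share the same core \emph{set} $W$, it demands that $f,g:[m]\to[n]$ agree \emph{pointwise} on $[m]\setminus I$ for a single $r$-set $I\subseteq[m]$, with $f(I)\cup g(I)$ an edge of the matching. Then a given $f$ has only $O_r(\phi(f))$ complementers, where $\phi(f)$ counts (roughly) the $r$-subsets of $[m]$ that $f$ maps onto half of some hyperedge. A second idea is still needed because $\phi(f)$ can be large: one restricts to ``$s$-good'' maps with $1\le\phi(f)\le s$ for a constant $s=s(r)$, and proves via a generalized birthday paradox that a constant fraction of all maps $[m]\to[n]$ are $s$-good precisely when $m\asymp n^{1-1/r}$ --- this is where that exponent genuinely enters. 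The decomposition of $H$ into $O_r(\Delta(H))$ matchings is used, as you anticipate, but \emph{before} the construction; within each matching the resulting graph then has $O_r(1)$-bounded degree, and summing over colours gives $\|A\|\lesssim_r\Delta(H)$.
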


\begin{remark}
We do not know if the value of~$m$ in Lemma~\ref{lem:main} is optimal.
More generally, a better bound in Theorem~\ref{thm:main} would follow if, for any fixed hypergraphs $H_1,\dots,H_k$ as in the lemma, there exist $N \leq o(n^{m})$, matrices $A_1,\dots,A_k\in \R^{N\times N}$ with $\|A_i\| \lesssim_r \Delta(H_i)$ and a map $f:\pmset{n}\to \{y\in \R^N \st \|y\|_2 \leq \sqrt{N}\}$ such that $p_{H_i}(x) = (n/c_rN)\langle A_i f(x),f(x)\rangle$ for every $i\in[k]$ and $x\in \pmset{n}$.
\end{remark}
 
%For $m = n^{1 - 1/r}$, we construct graphs~$G_i$ on approximately $n^m$ vertices, with the property that for each $x\in\pmset{n}$ it holds that $p_{H_i}(x) = c p_{G_i}(x^{\otimes m})$ for some constant~$c$ depending only on~$r$ (Lemma~\ref{lem:main}).
%(Switching from Boolean vectors to sign vectors can only make the Gaussian width larger.)
With this lemma in hand, the proof of Theorem~\ref{thm:main} is straightforward (see below).
The idea behind Lemma~\ref{lem:main} is to use decompositions into matchings and a generalization of the Birthday Paradox that says that for any $n$-vertex $2r$-matching, a random subset of $C_rn^{1-1/r}$ vertices contains~$r$ vertices of any fixed edge with probability  $c_r/n$.
To illustrate how this is used in the $r=2$ case, let~$H$ be a 4-matching, let $m = C_2\sqrt{n}$ and $N = n^m$.
It follows from the generalized Birthday Paradox that there are  $c_2N/n$ strings in~$[n]^m$ containing at least two elements of a given edge.
Now let~$G$ be the graph with vertex set~$[n]^m$ whose edges are the pairs~$\{u,v\}$ that \emph{cover} some edge in~$H$ and \emph{complement} each other, meaning:
there are indices $i,j\in[m]$ such that $\{u_i,u_j,v_i,v_j\}\in E(H)$ and $u_\ell = v_\ell$ for all $\ell\not\in\{i,j\}$.
The main observation is that for every edge $\{u,v\} \in E(G)$ that covers an edge $e\in E(H)$ and every $x\in \pmset{n}$, we have 
\beqn
(x^{\otimes m})_u(x^{\otimes m})_v = 
\prod_{\ell=1}^m x_{u_\ell}x_{v_\ell} 
=
x_{u_i}x_{u_j}x_{v_i}x_{v_j}
=
\prod_{w\in e}x_w.
\eeqn
It follows that, modulo the relations $x_1^2 = 1,\dots,x_n^2 = 1$, we have $p_G(x^{\otimes m}) = (c_2N/n)p_H(x)$.
The (appropriately scaled) adjacency matrix of~$G$  then satisfies the second criterion of the lemma, but it will have large norm if~$G$ has high degree. To obtain a matrix with the desired norm, we consider a pruned version of~$G$ in which we keep only edges that do not cover too many edges of~$H$ (at the cost of only a constant-factor decrease of the constant~$c_2$).
\medskip
\medskip

We now give the formal proof of Theorem~\ref{thm:main}.
The following simple proposition is used for the first step, in which we homogenize the polynomials.
Given two hypergraphs~$H,H'$, say that   $H'$  \emph{majorizes}~$H$ if $V(H)\subseteq V(H')$ and if for each edge $e\in E(H)$, there is a unique edge $e'\in E(H')$ such that $e\subseteq e'$.

\begin{proposition}\label{prop:homogeneous}
For any $n$-vertex $d$-hypergraph~$H$, there is a $d$-uniform hypergraph~$H'$ on~$dn$ vertices that majorizes~$H$ and satisfies $\Delta(H') = \Delta(H)$.
\end{proposition}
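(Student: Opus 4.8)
The plan is to homogenize $H$ by \emph{padding} each edge up to size exactly $d$, routing the extra vertices ``locally'' so that the maximum degree is not increased. I would take $V(H') = V(H)\cup V_2\cup\cdots\cup V_d$, where $V_2,\dots,V_d$ are $d-1$ pairwise disjoint fresh copies of $V(H)$; writing $v_j\in V_j$ for the $j$-th copy of $v\in V(H)$, this gives $|V(H')| = n+(d-1)n = dn$ and $V(H)\subseteq V(H')$. Then, for each edge $e\in E(H)$ with $s=|e|\ge 1$, I fix a representative $v(e)\in e$ (say the smallest element under a fixed ordering of $V(H)$) and set
\beqn
e' \;=\; e\cup\{\,v(e)_2,\ v(e)_3,\ \dots,\ v(e)_{d-s+1}\,\},
\eeqn
a set of size exactly $d$ (the added index range being empty when $s=d$), and put $E(H')=\{\,e' : e\in E(H)\,\}$; an empty edge, if present, is padded by $d$ otherwise-unused copies, and one may as well assume $p_H$ has no constant term, which is harmless here.

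Then $H'$ is $d$-uniform and contains $V(H)$. Since the padding vertices lie outside $V(H)$ we have $e=e'\cap V(H)$, so $e\mapsto e'$ is injective and $e'$ is the unique edge of $H'$ containing $e$ (using that no edge of $H$ is properly contained in another, which we may assume in our setting); that is, $H'$ majorizes $H$. The crux is $\Delta(H')=\Delta(H)$. An original vertex $v\in V(H)$ lies in $e'$ precisely when it lies in $e$, so $\deg_{H'}(v)=\deg_H(v)$, giving $\Delta(H')\ge\Delta(H)$. A copy $v_j$ with $j\ge 2$ lies in $e'$ only if $v$ was \emph{chosen as the representative} of $e$, so
\beqn
\deg_{H'}(v_j)\;\le\;\bigl|\{e\in E(H): v(e)=v\}\bigr|\;\le\;\bigl|\{e\in E(H): v\in e\}\bigr|\;=\;\deg_H(v)\;\le\;\Delta(H).
\eeqn
Hence every vertex of $H'$ has degree at most $\Delta(H)$, so $\Delta(H')\le\Delta(H)$, and therefore $\Delta(H')=\Delta(H)$ (the case $E(H)=\emptyset$ being trivial).

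The one genuine obstacle is this degree bound. The naive way to homogenize---padding every short edge from a single shared pool of $d-1$ dummy vertices---would give those dummies degree $|E(H)|$, which can be polynomially larger than $\Delta(H)$, so the multiplicity would not be preserved. Assigning each edge its \emph{own} shadow copies of one of its own vertices keeps the accounting local, and the whole estimate then reduces to the elementary fact that a vertex serves as the designated representative of no more edges than it already belongs to.
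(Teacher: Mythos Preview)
Your proof is correct and follows the same underlying idea as the paper's---pad each edge with fresh vertices while keeping the assignment of padding vertices ``local'' so as not to inflate the maximum degree---but the mechanism you use is different and a bit cleaner. The paper first invokes the handshaking lemma to bound $|E(H)|\le \Delta(H)\cdot n$, then partitions $E(H)$ arbitrarily into $n$ groups $E_1,\dots,E_n$ of size at most $\Delta(H)$ each, and assigns to each group its own pool $W_i$ of $d-1$ new vertices; the degree bound for the new vertices comes from the group-size bound. Your construction instead partitions the edges implicitly by their chosen representative $v(e)$, and the bound $|\{e:v(e)=v\}|\le\deg_H(v)\le\Delta(H)$ is immediate without any edge count. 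What your route buys is that the handshaking step becomes unnecessary; what the paper's route buys is that it does not need any vertex inside $e$ to anchor the padding (so empty edges require no special treatment). Both constructions share the same minor caveat about the word ``unique'' in the definition of majorization when $H$ has nested edges, which you flag explicitly and the paper does not; for the application this is harmless, since what is actually used downstream is only the bijection $e\mapsto e'$ with $e'\cap V(H)=e$.
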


\begin{proof}
Let $t = \Delta(H)$.
It follows from the handshaking lemma that $|E(H)| \leq tn$.
Partition  $E(H) = \{E_1,\dots,E_n\}$ into~$n$ pairwise disjoint sets of size at most~$t$ each.
Add to~$V(H)$ pairwise disjoint sets $W_1,\dots,W_n$ of $d-1$ new vertices each.
For each $i\in [n]$, complete each edge $e\in E_i$ to a set of size~$d$ by adding vertices from~$W_i$ and let~$H'$ be the hypergraph thus obtained.
Observe that we have not increased the degree of the vertices in~$V(H)$.
Since each~$E_i$ has size at most~$t$, the new vertices in~$W_i$ also have degree at most~$t$ and therefore, $\Delta(H') =t$.
It is trivial to verify that~$H'$ satisfies the other desired properties.
\end{proof}
\medskip

\begin{proof}[ of Theorem~\ref{thm:main}]
Let $r = \ceil{d/2}$ and for each $i\in[k]$, let $H_i$ be the $d$-hypergraph of degree ~$t$ such that $\psi_i = p_{H_i}$, with~$p_{H_i}$ as in~\eqref{eq:pHdef}.
Assume that $n \geq n_0(r)$ for $n_0(r)$ as in Lemma~\ref{lem:main}.
 We start by reducing to the setting where each~$H_i$ is $2r$-uniform and of degree at most~$t$.
To this end, let $H_i' = ([n]\cup[(2r-1)n], E_i')$ be a $2r$-uniform hypergraph that majorizes~$H_i$ as in Proposition~\ref{prop:homogeneous}, which exists since any $d$-hypergraph is a $2r$-hypergraph.
Then, for each $e\in E(H_i)$, there is a unique set $f(e)\subseteq [(2r-1)n]$ such that $e\cup f(e)\in E(H_i')$.
It follows that 
\beqn
p_{H_i}(x) = 
\sum_{e\in E(H_i)}\prod_{i\in e} x_i
=
\sum_{e\in E(H_i)}\prod_{i\in e} x_i\prod_{j\in f(e)} 1
=
p_{H'_i}((x, {\bf 1})),
\eeqn 
where ${\bf 1}\in \R^{(2r-1)n}$ is the all-ones vector.
Hence, if we let $\psi':\R^{2rn}\to\R^k$ be the polynomial map whose coefficients are given by~$p_{H'_i}$, then
\beqn
\gw\big(\psi(\bset{n})\big) 
\leq
\gw\big(\psi'(\bset{2rn})\big).
\eeqn
%Given an $s$-hypergraph~$H$ over~$[n]$ with degree at most~$K$, we first add new vertices to edges with less than~$2r$ vertices to make all the edges have size~$2r$. 
%By grouping the edges into~$n$ sets of size at most~$K$ and using the same new vertices for all edges in the same group, we can obtain a new hypergraph~$H'$ on $n'=O(rn)$ vertices which is $2r$-uniform and whose degree is still at most~$K$.
%Moreover, we can do this in such a way that the maximum degree of $H'$ is still at most $K$. 
%In terms of the polynomials, we are homogenizing the polynomial $np_H(x)$ with new variables to a get a new multilinear polynomial $n'p_{H'}(x,x')$ of degree $2r$ where $x'$ are the new variables added. Clearly, $np_H(x)=n'p_{H'}(x,x')$ when the $x'$ variables are set to 1. 
%So,
%\beqn
%\gw\big( \psi_{H_1,\dots,H_k}(\bset{n})\big) \lesssim_r \gw\big( \psi_{H_1',\dots,H_k'}(\bset{n'})\big). 
%\eeqn
Since the dependence of our claimed bound on the Gaussian width is polynomial in~$n$, the extra vertices will result in an extra factor depending only on~$d$.
It thus suffices to prove the theorem for the case where $H_1,\dots,H_k$ are $2r$-uniform.
%Therefore, wlog, we can assume that $H_1,\dots,H_k$ are $2r$-uniform hypergraphs. 

Observe that since the polynomials $\psi_i$ are multilinear, the Gaussian width is bounded from above by replacing binary vectors with sign vectors.
In particular,
\beqn
\gw\big( \psi(\bset{n})\big) \leq 
\Exp \max\Big\{\sum_{i=1}^k g_ip_{H_i}(x)\st x\in \pmset{n}\Big\}.
\eeqn
Let $m = C_rn^{1 - 1/r}$ and $N = n^m$ and for each $i\in[k]$, let
 $A_i\in \R^{N\times N}$ be a matrix for $p_{H_i}$ as in Lemma~\ref{lem:main}.
Then, for every $x\in\pmset{n}$,
\begin{align*}
\sum_{i=1}^k g_ip_{H_i}(x)
&=
\frac{n}{c_rN}\sum_{i=1}^n g_i \langle A_ix^{\otimes m}, x^{\otimes m}\rangle
%&= 
%c_r^{-1}\Big\langle \frac{x^{\otimes m}}{\sqrt{N}}, \Big(\sum_{i=1}^k g_iA_i\Big)\frac{x^{\otimes m}}{\sqrt{N}}\Big\rangle\\
\leq
\frac{n}{c_r}\Big\|\sum_{i=1}^kg_iA_i\Big\|,
\end{align*}
where in the inequality we used that $x^{\otimes m}$ has Euclidean norm~$\sqrt{N}$.
Taking expectations, it then follows from Theorem~\ref{thm:TJ} that the Gaussian width of $\psi(\bset{n})$ is at most
\begin{align*}
%\gw\big( \psi_{H_1,\dots,H_k}(\bset{n})\big)
%&\leq
\frac{n}{c_r}\Exp\Big[\Big\|\sum_{i=1}^k g_iA_i\Big\|\Big]
\lesssim \frac{n}{c_r}\sqrt{\log N}\Big(\sum_{i=1}^k\|A_i\|^2\Big)^{1/2}
\lesssim_{r} nt\, \sqrt{kn^{1 - 1/r}\log n},
\end{align*}
where in the second inequality we used that $\|A_i\| \leq O_{r}(t)$ for each $i\in[k]$.
\end{proof}

\section{Proof of the matrix lemma}
In this section we prove Lemma~\ref{lem:main}.
The starting point is a decomposition of a bounded-degree hypergraph into a small number of matchings.
For this, we use the following basic result on edge colorings.
%An $n$-vertex $r$-hypergraph~$H= (V,E)$ consists of a set~$V$ of cardinality~$n$ and a multiset~$E$, also denoted~$E(H)$, of subsets of~$V$ of size at most $r$, called the edges.
%A hypergraph is $r$-uniform if each edge has exactly~$r$ vertices.   
%In the following, all $n$-vertex hypergraphs will be assumed to have~$[n]$ as the vertex set~$V$.
%The degree of a vertex~$v$ is the number of edges containing~$v$ and the degree of~$H$, denoted~$\Delta(H)$, is the maximum degree among its vertices. 
The \textit{edge chromatic number} of a hypergraph~$H$, denoted by $\chi_E(H)$, is the minimum number of colors needed to color the edges of $H$ such that no two edges which intersect have the same color.  
Note that~$\chi_E(H)$ equals the smallest number of matchings into which~$E(H)$ can be partitioned. 
%For small values of~$s$, the parameters $\chi_H(E)$ and $\Delta(H)$ are closely related.

\begin{lemma}
\label{lem:chromatic_number}
Let $H$ be a $d$-hypergraph. Then, $$\Delta(H)\le \chi_E(H)\le d (\Delta(H)-1)+1.$$
\end{lemma}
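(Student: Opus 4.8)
The lower bound $\Delta(H) \le \chi_E(H)$ is immediate: fix a vertex $v$ of maximum degree $\Delta(H)$; all $\Delta(H)$ edges through $v$ pairwise intersect (they all contain $v$), so no two of them can receive the same color, forcing at least $\Delta(H)$ colors. So the content is the upper bound $\chi_E(H) \le d(\Delta(H)-1)+1$.

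For the upper bound, the plan is a greedy / list-coloring argument. Set $t = \Delta(H)$ and let $k = d(t-1)+1$; I claim the edges of $H$ can be properly colored with the color set $[k]$. Order the edges of $E(H)$ arbitrarily as $e_1, e_2, \dots, e_{|E|}$ and color them one at a time. When we come to color $e_j$, the edges already colored that conflict with $e_j$ (i.e.\ intersect it) are exactly those earlier $e_i$ with $e_i \cap e_j \neq \emptyset$. Each such $e_i$ shares at least one vertex with $e_j$; bucket these conflicting edges according to a chosen common vertex $v \in e_i \cap e_j$. Since $|e_j| \le d$, there are at most $d$ buckets, and for each vertex $v \in e_j$ the number of edges through $v$ (other than $e_j$ itself) is at most $\deg(v) - 1 \le t - 1$. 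Hence the total number of already-colored edges conflicting with $e_j$ is at most $d(t-1)$, so at most $d(t-1)$ colors are forbidden, and since we have $d(t-1)+1$ colors available, a valid color remains for $e_j$. This produces a proper edge coloring with $d(\Delta(H)-1)+1$ colors, i.e.\ a partition of $E(H)$ into that many matchings.

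I do not expect a serious obstacle here — the argument is the standard greedy bound adapted to hypergraphs, analogous to the $\Delta+1$ greedy bound for graph vertex colorings (a $d$-hypergraph's ``line graph'' has maximum degree at most $d(\Delta(H)-1)$, and greedily coloring its vertices gives the bound). The only mild care needed is the multiset subtlety: parallel edges (edges equal as vertex sets, or more generally any two edges sharing a vertex) are all mutually conflicting, but they are still correctly counted in the per-vertex degree bound $\deg(v)$, so the estimate $d(t-1)$ on forbidden colors is unaffected. One could alternatively phrase this non-constructively via the line graph and Brooks-type bounds, but the greedy argument is cleanest and self-contained, and it is all that is needed for the decomposition into $\lesssim_d \Delta(H)$ matchings used in the proof of Lemma~\ref{lem:main}.
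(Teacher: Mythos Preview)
Your proof is correct and matches the paper's approach: both rest on the observation that any edge of $H$ intersects at most $d(\Delta(H)-1)$ other edges, so the line graph of $H$ has maximum degree at most $d(\Delta(H)-1)$ and can be greedily $(d(\Delta(H)-1)+1)$-colored. The paper phrases this via the line graph and invokes Brooks' Theorem (though only the trivial greedy bound $\chi(G)\le\Delta(G)+1$ is actually needed), while you unpack the greedy argument directly---precisely the alternative you note at the end.
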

\begin{proof}
Clearly $\chi_E(H)\ge \Delta(H)$ since edges containing  a maximum degree vertex should get different colors. To prove the upper bound, form a graph $G$ whose vertices are $E(H)$, and add edges between intersecting hypergraph edges. Then~$\chi_E(H)$ is equal to the vertex chromatic number of the graph $G$, which, by Brooks' Theorem, is at most $\Delta(G)+1$. Since an edge in~$H$ can intersect at most $d(\Delta(H)-1)$ other edges, $\Delta(G)\le d(\Delta(H)-1)$.
\end{proof}

To deal with matchings, we introduce the following definitions.
Let~$\mathcal M\subseteq{[n]\choose 2r}$ be a maximal  $2r$-matching of~$[n]$.
Let $s =200\cdot 4^r$.
Given a string $x\in\pmset{n}$ write its $m$-fold tensor product as
\beqn
x^{\otimes m} = \Big( \prod_{i=1}^m x_{f(i)} \Big)_{f:[m]\to[n]}.
\eeqn
Given a mapping $f:[m]\to [n]$ and set $S\in \mathcal M$, let
\beqn
\mu_S(f) = \sum_{T\in {S\choose r}}\prod_{i\in T} |f^{-1}(i)|.
\eeqn 
Note that this is a count of the $r$-subsets $I\subseteq [m]$  such that $|S\cap f(I)| = r$.
Denote
\beqn
\phi(f) = \sum_{S\in \mathcal M} \mu_S(f).
\eeqn
For $\ell\in \N$, say that $f$ is \emph{$\ell$-good} if $1\le \phi(f) \le \ell$.
Say that $g:[m]\to[n]$ \emph{complements}~$f$ if it satisfies the following two criteria:
\begin{enumerate}
\item There exists exactly one $I\in{[m]\choose r}$ such that $f(I)\cup g(I)\in\mathcal M$.
\item For all $i\in[m]\smallsetminus I$, we have $g(i) = f(i)$.
\end{enumerate}
If $g$ complements~$f$ then clearly the converse also holds.
Say that the complementary pair $(f,g)$ \emph{covers $S\in\mathcal M$} if $f(I)\cup g(I) = S$.
Observe that if $(f,g)$ covers~$S$, then for every $x\in \pmset{m}$, we have
\beq\label{eq:fgx}
(x^{\otimes m})_f(x^{\otimes m})_g
=
\prod_{i=1}^m x_{f(i)}x_{g(i)}
=
\prod_{j\in S}x_j.
\eeq
Define the set of ordered pairs
\beq\label{eq:Pdef}
\mathcal P = \big\{(f,g) \st \text{$f$ is $s$-good and $g$ complements~$f$}\big\}.
\eeq

%\begin{lemma}\label{lem:xAx}
%Let~$\mathcal P$ be as in~\eqref{eq:Pdef} and let
%$A:[n]^m\times[n]^m\to\bset{}$ be its incidence matrix, identifying the set of maps~$f:[m]\to[n]$ with $[n]^m$ in the obvious way. 
%Then, for every $x\in \pmset{n}$, we have
%\beqn
%\langle x^{\otimes m}, Ax^{\otimes m}\rangle
%=
%\frac{|\mathcal P|}{|\mathcal M|} \sum_{S\in\mathcal M} \prod_{j\in S} x_j.
%\eeqn
%\end{lemma}

%\begin{proof}
%For every $x\in\pmset{n}$, we have
%\beq\label{eq:xAx}
%\langle x^{\otimes m}, Ax^{\otimes m}\rangle
%=
%\sum_{(f,g)\in \mathcal P} (x^{\otimes m})_f(x^{\otimes m})_g.
%\eeq
%By definition of~$\mathcal P$ and~\eqref{eq:fgx}, the summands in~\eqref{eq:xAx}  equal
%$
%\prod_{j\in S} x_j
%$
%for some $S\in\mathcal M$.
%It thus suffices to show that for each $S\in \mathcal M$, the number of $(f,g)\in \mathcal P$ that cover~$S$ equals $|\mathcal P|/|\mathcal M|$.
%Fix distinct sets $S,T\in\mathcal M$ and let $(f,g)\in\mathcal P$ cover~$S$.
%Let $\pi \in S_n$ be an automorphism of~$\mathcal M$ such that $\pi(S) = T$.
%Then $(\pi\circ f, \pi\circ g)$ covers~$T$.
%Moreover, $\phi(\pi\circ f) = \phi(f)$.
%Hence, $(\pi\circ f, \pi\circ g)\in\mathcal P$.
%It follows that~$T$ is covered by at least as many pairs from~$\mathcal P$ as~$S$ is.
%Similarly, by considering~$\pi^{-1}$, the converse also holds.
%Hence~$\mathcal P$ covers all~$S\in\mathcal M$ equally.
%\end{proof}

\begin{proposition}\label{prop:equalcover}
Let~$\mathcal P$ be as in~\eqref{eq:Pdef}. Then, for every $S\in\mathcal M$, the number of pairs $(f,g)\in \mathcal P$ that cover~$S$ equals~$|\mathcal P|/|\mathcal M|$.
\end{proposition}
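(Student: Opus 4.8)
The plan is to exploit the symmetry of the construction of $\mathcal{P}$ under coordinate permutations of $[n]$. Fix two sets $S, S' \in \mathcal{M}$. Since $\mathcal{M}$ is a $2r$-matching, $|S| = |S'| = 2r$, so there is a bijection $\sigma : [n] \to [n]$ (a permutation) that maps $S$ onto $S'$ and maps $\mathcal{M}$ to itself (permuting the blocks of the matching, and acting arbitrarily on the leftover vertices outside $\bigcup \mathcal{M}$). The key observation is that $\sigma$ induces a bijection on mappings $[m] \to [n]$ via $f \mapsto \sigma \circ f$, and this bijection preserves all the relevant structure: it preserves $\mu_T$ in the sense that $\mu_{\sigma(T)}(\sigma \circ f) = \mu_T(f)$ for every $T \in \mathcal{M}$, hence preserves $\phi$, hence preserves the property of being $s$-good; and it takes a complementary pair $(f,g)$ to a complementary pair $(\sigma\circ f, \sigma \circ g)$ (the distinguished set $I \in \binom{[m]}{r}$ is unchanged, and $f(I) \cup g(I) \in \mathcal{M}$ iff $\sigma(f(I)) \cup \sigma(g(I)) \in \mathcal{M}$). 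Therefore $(f,g) \mapsto (\sigma \circ f, \sigma \circ g)$ is a bijection from $\mathcal{P}$ to itself.

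With this in hand, I would argue as follows. Let $\mathcal{P}_S = \{(f,g) \in \mathcal{P} : (f,g) \text{ covers } S\}$. If $(f,g)$ covers $S$, i.e. $f(I) \cup g(I) = S$ for the unique distinguished $I$, then $(\sigma \circ f)(I) \cup (\sigma \circ g)(I) = \sigma(S) = S'$, so $(\sigma\circ f, \sigma \circ g)$ covers $S'$. Thus the self-bijection of $\mathcal{P}$ restricts to a bijection $\mathcal{P}_S \to \mathcal{P}_{S'}$, giving $|\mathcal{P}_S| = |\mathcal{P}_{S'}|$ for all $S, S' \in \mathcal{M}$. Finally, every pair $(f,g) \in \mathcal{P}$ covers exactly one $S \in \mathcal{M}$: by criterion (1) in the definition of ``complements'', there is exactly one $I \in \binom{[m]}{r}$ with $f(I) \cup g(I) \in \mathcal{M}$, and this determines a unique $S := f(I) \cup g(I)$. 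Hence $\mathcal{P} = \bigsqcup_{S \in \mathcal{M}} \mathcal{P}_S$ is a partition into $|\mathcal{M}|$ parts of equal size, so $|\mathcal{P}_S| = |\mathcal{P}|/|\mathcal{M}|$ for every $S$.

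The main thing to be careful about — the only real obstacle — is verifying cleanly that the permutation action is well-defined and invariant, in particular that (a) one can always choose $\sigma$ fixing $\mathcal{M}$ setwise and sending $S$ to $S'$ (here one uses that $\mathcal{M}$ is a matching, so its blocks are disjoint $2r$-sets and any block can be sent to any other block by a permutation of $[n]$ that permutes the blocks), and (b) the quantities $\mu_T(f)$ and $\phi(f)$, and the notion of a complementary pair, are genuinely defined purely in terms of the incidence pattern between $f$ and the matching $\mathcal{M}$, so they transform equivariantly. Both are routine once stated precisely; I would spell out (b) by noting $\mu_T(f) = |\{I \in \binom{[m]}{r} : f(I) \subseteq T, |f(I)| = r\}|$ and that $f(I) \subseteq T$ iff $(\sigma \circ f)(I) \subseteq \sigma(T)$, with $|f(I)| = |(\sigma\circ f)(I)|$ since $\sigma$ is injective. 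No counting estimates or use of the Birthday-paradox bound are needed for this proposition; it is a pure symmetry argument.
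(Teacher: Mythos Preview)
Your proof is correct and follows essentially the same symmetry argument as the paper: pick a permutation $\sigma$ of $[n]$ that fixes $\mathcal{M}$ setwise and sends one block to another, and observe that post-composition by $\sigma$ preserves $\phi$ and the notion of complementary pair. The paper uses the specific transposition swapping two blocks $S,T$ (identity elsewhere) and phrases the conclusion as an injection $\mathcal{P}_S\hookrightarrow\mathcal{P}_T$ plus symmetry, whereas you use a block-permuting $\sigma$ to obtain a self-bijection of $\mathcal{P}$ and add the explicit partition $\mathcal{P}=\bigsqcup_{S\in\mathcal{M}}\mathcal{P}_S$; these are cosmetic differences only.
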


\begin{proof}
Fix distinct sets $S,T\in\mathcal M$ and let $\pi\in S_n$ be a permutation such that $\pi(S)=T, \pi(T)=S$ and $\pi(i)=i$ for all $i\notin S\cup T$. Let $\cP_S$ be the set of pairs $(f,g)\in \cP$ which cover~$S$ and define $\cP_T$ similarly. We claim that the map $\psi: (f,g)\mapsto (\pi\circ f, \pi \circ g)$ is an injective map from~$\cP_S$ to $\cP_T$. It follows that~$T$ is covered by at least as many pairs from~$\mathcal P$ as~$S$ is. Similarly, interchanging $S$ and $T$, the converse also holds. To prove the claim, note that if $(f,g)$ covers $S$, then $(\pi\circ f, \pi\circ g)$ covers~$T$. Moreover, $\phi(\pi\circ f) = \phi(f)$ because $\pi$ maps edges of the matching $\cM$ to edges of $\cM$. Thus $\psi(\cP_S)\subset \cP_T$.  Finally $\psi$ is injective because if $\pi\circ f=\pi \circ f'$ for some $f,f':[m]\to [n]$, then $f=f'$. Hence~$\mathcal P$ covers all~$S\in\mathcal M$ equally.
\end{proof}

\begin{proposition}\label{prop:phig}
For every $(f,g)\in\mathcal P$, we have that $g$ is $s^2$-good.
\end{proposition}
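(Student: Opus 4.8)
The plan is to exploit the fact that $g$ agrees with $f$ outside the unique $r$-set $I$ witnessing the complementation, so that $\phi(g)$ can exceed $\phi(f)$ only through $r$-subsets $J\subseteq[m]$ that meet $I$, of which there are comparatively few. I would begin by unpacking what ``$g$ complements $f$'' gives structurally: writing $S_0\in\mathcal M$ for the edge covered by $(f,g)$, the requirement $f(I)\cup g(I)\in\mathcal M$ together with $|I|=r$ and $|S_0|=2r$ forces $f|_I$ and $g|_I$ to be injective with $f(I)$ and $g(I)$ a partition of $S_0$ into two $r$-sets. The lower bound $\phi(g)\ge 1$ is then immediate, since $g(I)$ is an $r$-subset of $S_0$, so $\mu_{S_0}(g)\ge 1$.

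For the upper bound I would expand $\phi(g)=\sum_{S\in\mathcal M}\mu_S(g)$ and classify the $r$-sets $J$ witnessing its terms by whether $J\cap I=\emptyset$. If $J\cap I=\emptyset$ then $g(J)=f(J)$, so every such $J$ is already counted by $\phi(f)\le s$. If $J\cap I\ne\emptyset$, pick $j\in J\cap I$; then $g(j)\in g(I)\subseteq S_0$ lies in $g(J)$, and since the edges of the matching $\mathcal M$ are pairwise disjoint, $g(J)$ can only be an $r$-subset of $S_0$ itself. It thus suffices to bound the number of $r$-sets $J$ with $J\cap I\ne\emptyset$ and $g(J)\subseteq S_0$. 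I would write $J=K\sqcup L$ with $K=J\cap I$ and $L=J\setminus I$: there are $\binom{r}{|K|}$ choices of $K\subseteq I$ (each automatically having $g(K)\subseteq S_0$), and for each one the number of admissible $L$ is at most the number of $L\subseteq[m]$ for which $f(L)$ is an $(r-|K|)$-subset of $S_0$, which is exactly the elementary symmetric polynomial $e_{r-|K|}$ evaluated at the fibre sizes $\{|f^{-1}(i)|\}_{i\in S_0}$.

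The remaining, and main, task is to bound these lower-order symmetric polynomials by the top one, $e_r(\{|f^{-1}(i)|\}_{i\in S_0})=\mu_{S_0}(f)\le\phi(f)\le s$. Crucially, at least $r$ of the fibre sizes $|f^{-1}(i)|$, $i\in S_0$, are nonzero --- namely those for $i\in f(I)$ --- and I would prove the elementary fact that for a nonnegative vector whose support has size at least $r$, $e_u\le\binom{r}{u}\,e_r$ for every $u\le r$, by a short double-counting over $r$-element supersets. Summing over $|K|$ then gives that the number of ``new'' $J$ is at most $\sum_{u=0}^{r-1}\binom{r}{u}e_u\le s\sum_{u=0}^{r-1}\binom{r}{u}^2=s\bigl(\binom{2r}{r}-1\bigr)$, whence $\phi(g)\le s\binom{2r}{r}\le 4^r s\le s^2$ by the choice $s=200\cdot 4^r$. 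I expect the genuine difficulty to be only organisational: isolating the clean symmetric-function inequality, verifying that the $J=K\sqcup L$ parametrisation is an overcount (so the disjointness of $g(K)$ and $f(L)$ may be dropped) rather than an undercount, and checking that no $J$ is counted for two distinct edges $S$.
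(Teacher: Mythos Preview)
Your argument is correct. One small imprecision worth flagging: the inequality $e_u\le\binom{r}{u}\,e_r$ is \emph{false} for general nonnegative real vectors with support of size at least~$r$ (take one entry equal to~$1$ and $r-1$ entries equal to~$\varepsilon>0$; then $e_1\approx 1$ while $e_r\approx\varepsilon^{r-1}$). What makes your double-counting work is that the fibre sizes $|f^{-1}(i)|$ are nonnegative \emph{integers}, so the nonzero ones are at least~$1$ and extending a $u$-set in the support to an $r$-set in the support can only increase the product. State the hypothesis accordingly.

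The paper takes a different and shorter route: it compares the histograms $F(i)=|f^{-1}(i)|$ and $G(i)=|g^{-1}(i)|$, observes that $F=G$ off~$S_0$ (so $\mu_{S'}(g)=\mu_{S'}(f)$ for every $S'\neq S_0$) and $G\le F+1$ on~$S_0$, and then bounds $\mu_{S_0}(g)\le\sum_{T\in\binom{S_0}{r}}\prod_{i\in T}(F(i)+1)\le 4^r+2^r\mu_{S_0}(f)$, giving $\phi(g)\le 4^r+2^r\phi(f)\le s^2$. This avoids your $J=K\sqcup L$ decomposition and the symmetric-function lemma entirely. On the other hand, the paper's term-by-term step $\prod_{i\in T}(F(i)+1)\le 1+2^r\prod_{i\in T}F(i)$ is only valid when $F(i)\ge 1$ for every $i\in T$, which need not hold for all~$T$ (the elements of $S_0\setminus f(I)$ may have $F(i)=0$); your combinatorial count sidesteps this delicacy cleanly, at the cost of a slightly weaker numerical bound $\phi(g)\le\binom{2r}{r}\,\phi(f)$ and a bit more bookkeeping.
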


\begin{proof}
Let $S\in\mathcal M$ and $(f,g)\in\mathcal P$ be such that $(f,g)$ covers~$S$.
Consider the histograms $F,G: [n]\to \{0,1,\dots,m\}$ given by $F(i) = |f^{-1}(i)|$ and $G(i) = |g^{-1}(i)|$ for each $i\in[n]$.
Then~$F$ and $G$ differ only in~$S$.
In particular, there is an $r$-set $T\subseteq S$ such that $G(i) = F(i) + 1$ for each $i\in T$ and $G(i) = F(i) - 1$ for each $i\in S\smallsetminus T$.
Hence,
\begin{align*}
\mu_S(g) &= \sum_{T\in {S\choose r}} \prod_{i\in T} G(i)\\
&\leq \sum_{T\in {S\choose r}} \prod_{i\in T} \big(F(i)+1\big)\\
&\leq \sum_{T\in {S\choose r}} \Big(1 + 2^r\prod_{i\in T} F(i)\Big)\\
&\leq
4^r + 2^r\mu_S(f).
\end{align*}
For all other $S'\in\mathcal M$, we have $\mu_{S'}(g) = \mu_{S'}(f)$.
Moreover, $f$ must be $s$-good for $(f,g)$ to belong to~$\mathcal P$.
It follows that
\beqn
\phi(g) = \sum_{S'\in\mathcal M}\mu_{S'}(g) \leq 4^r +2^r\sum_{S'\in\mathcal M}\mu_{S'}(f) = 4^r + 2^r\phi(f) \leq s^2,
\eeqn
where in the last line we used the choice of~$s = 200\cdot 4^r$.
\end{proof}

\begin{lemma}[Generalized birthday paradox]\label{lem:bday}
For every $r\in \N$ there exists a $C_r\in (0,\infty)$ and an $n_0(r)\in \N$ such that the following holds.
Let~$h$ be a uniformly distributed random variable over the set of maps from~$[m]$ to~$[n]$.
Then, provided $n \geq n_0(r)$ and $m = C_rn^{1 - 1/r}$,
\beqn
\Pr\big[\text{$h$ is $s$-good}\big] \geq \frac{1}{2}.
\eeqn
\end{lemma}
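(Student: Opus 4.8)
The plan is to bound $\Pr[\phi(h) = 0]$ and $\Pr[\phi(h) > s]$ separately, each by a constant strictly less than $1/2$, so that the complementary event $1 \le \phi(h) \le s$ has probability at least $1/2$ once $n$ is large. Recall that $\phi(h) = \sum_{S\in\mathcal M}\mu_S(h)$, where $\mu_S(h)$ counts the $r$-subsets $I\subseteq[m]$ with $|S\cap h(I)| = r$; equivalently, $\phi(h)$ counts pairs $(I, S)$ with $I\in\binom{[m]}{r}$, $S\in\mathcal M$, and $h$ mapping $I$ bijectively onto an $r$-subset of $S$. Since $\mathcal M$ is a \emph{maximal} $2r$-matching of $[n]$, it has $|\mathcal M| \geq \lfloor n/(2r)\rfloor$ edges, so the union $\bigcup_{S\in\mathcal M} S$ covers all but at most $2r-1$ vertices of $[n]$ — this is the only structural fact about $\mathcal M$ we need, and it is what makes the "hit some edge" event likely.

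First I would handle the lower tail. Let $W = \sum_{I\in\binom{[m]}{r}} Y_I$ where $Y_I$ is the indicator that $h$ restricted to $I$ is injective with image inside some (necessarily unique) $S\in\mathcal M$; then $\phi(h) \ge W$ (in fact $\phi(h)=W$ since each such $I$ contributes exactly $1$ to exactly one $\mu_S$). A direct computation gives $\E Y_I = |\mathcal M|\cdot\frac{\binom{2r}{r} r!}{n^r} = \Theta_r(n^{1-r}\cdot \text{const})$; summing over the $\binom{m}{r} = \Theta_r(m^r)$ choices of $I$ and plugging in $m = C_r n^{1-1/r}$ yields $\E W = \Theta_r(C_r^r)$, which we can make as large a constant as we like by choosing $C_r$ large. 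The standard second-moment argument then applies: I would compute $\var(W)$ by splitting the sum $\sum_{I,I'}\Cov(Y_I,Y_{I'})$ according to $j = |I\cap I'|$. For $j=0$ the terms are essentially uncorrelated (the slight positive correlation from $I,I'$ possibly landing in the same $S$ is lower order); for $1\le j\le r-1$ the number of pairs is $\Theta_r(m^{2r-j})$ and each contributes $O_r(n^{-(2r-j)})$, and for $j=r$ we recover $\E W$. Comparing with $(\E W)^2 = \Theta_r(C_r^{2r})$ shows $\var(W) = O_r(\E W + (\E W)^2/C_r) \le (\E W)^2 / 100$ once $C_r$ and $n$ are large, so Chebyshev gives $\Pr[W = 0] \le \Pr[|W - \E W| \ge \E W] \le 1/100$.

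Next the upper tail: I would bound $\E\phi(h)$ from above — it is at most $\binom{m}{r}\cdot|\mathcal M|\cdot\frac{(2r)^r r!}{n^r} \le (2e)^{2r} C_r^r$ by the same count, a constant $K_r$ depending only on $r$ (and $C_r$). Markov's inequality then gives $\Pr[\phi(h) > s] \le K_r / s$. Here is the one delicate point in matching constants: $s = 200\cdot 4^r$ is fixed by the paper, whereas we want $C_r$ large for the lower-tail argument, which inflates $K_r \sim C_r^r$ and could make $K_r/s$ too big. The resolution is that the second-moment bound only needs $\var(W)/(\E W)^2$ small, and $\var(W)/(\E W)^2 = O_r(1/\E W) = O_r(C_r^{-r})$, so it suffices to take $C_r = \Theta_r(1)$ just large enough that $\E W \ge$ some absolute constant like $400$; then $K_r = \Theta_r(\E\phi(h))$ is comparable to $\E W$, and by choosing the implied constants carefully (or, if needed, replacing the naive Markov bound by a Chernoff/Bennett bound on $\phi(h)$, which is a sum of bounded-range negatively-associated-ish indicators) one gets $\Pr[\phi(h) > s] \le 1/100$ as well. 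I expect this constant-chasing between the lower-tail requirement ($C_r$ not too small) and the upper-tail requirement ($C_r$ compatible with the fixed $s$) to be the main obstacle; everything else is a routine first/second moment computation. Combining, $\Pr[1 \le \phi(h) \le s] \ge 1 - \frac{1}{100} - \frac{1}{100} \ge \frac12$ for $n \ge n_0(r)$, which is exactly the claim.
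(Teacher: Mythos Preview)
Your plan is sound and shares the paper's outline---bound $\Pr[\phi(h)=0]$ and $\Pr[\phi(h)>s]$ separately---but the paper treats the lower tail differently. Rather than a second-moment computation, it uses Poisson approximation for balls-and-bins: replacing the occupancy counts $|h^{-1}(i)|$ by i.i.d.\ Poisson$(m/n)$ variables, the event $\{\phi=0\}$ factors over the disjoint edges $S\in\mathcal M$ into a product of independent events, yielding $\Pr[\phi(h)=0]\le 2\exp\big(-C_r^r/(2er)\big)$; with $C_r=(6er)^{1/r}$ this is at most $1/4$, and the same approximation gives $\E\phi(h)\le 50\cdot 4^r$, so Markov with $s=200\cdot 4^r$ handles the upper tail with no tension between the two requirements. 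Your route is more elementary (no Poissonization) at the cost of the variance calculation; the constant-chasing you worry about is in fact benign. The $j=0$ covariances vanish \emph{exactly} (for disjoint $I,I'$ the indicators $Y_I,Y_{I'}$ depend on disjoint coordinates of~$h$), and the $1\le j\le r-1$ terms total $o(1)$---though your per-pair bound should read $O_r(n^{1-(2r-j)})$ rather than $O_r(n^{-(2r-j)})$, since for $j\ge 1$ both $I,I'$ are forced into a \emph{single} common edge $S$ and one still sums over $|\mathcal M|=\Theta(n)$ choices of it; the total is nevertheless $o(1)$. Hence $\var(W)\le \E W+o(1)$ and $\Pr[W=0]\le 1/\E W+o(1)$. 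Choosing $C_r$ so that $\E W\sim C_r^r\binom{2r}{r}/(2r)$ lands in, say, $[8,\,50\cdot 4^r]$ (e.g.\ $C_r^r=8$ works for all~$r$) makes both Chebyshev and Markov succeed simultaneously; the Chernoff fallback you mention is neither needed nor available, as the $Y_I$ are \emph{positively} correlated for overlapping~$I$.
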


We postpone the proof of Lemma~\ref{lem:bday} to Section~\ref{sec:bday}.

\begin{corollary}\label{cor:PA}
Let~$\mathcal P$ be as in~\eqref{eq:Pdef} and let
$A:[n]^m\times[n]^m\to\bset{}$ be its incidence matrix, that is $A(f,g)=1 \iff (f,g)\in \cP$.
Then, $|\mathcal P| \geq \Omega(N)$ and every row and every column of~$A$ has at most $s^2(r!)$ ones. % $\|A\| \leq O_{r}(1)$.
\end{corollary}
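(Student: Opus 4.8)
The plan is to prove the two claims of Corollary~\ref{cor:PA} separately, both by fairly direct counting arguments building on the propositions and lemmas already established.

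First I would establish the size bound $|\mathcal P| \geq \Omega(N)$. The idea is that the good maps already form a constant fraction of all maps: by Lemma~\ref{lem:bday}, at least $N/2$ of the $N = n^m$ maps $f:[m]\to[n]$ are $s$-good. For each such $f$, I claim there is at least one $g$ that complements it. Indeed, since $\phi(f) \geq 1$, there is at least one $r$-subset $I \in \binom{[m]}{r}$ with $f(I) \subseteq S$ for some $S \in \mathcal M$ and $|f(I)| = r$ (i.e., $f$ restricted to $I$ is injective onto an $r$-element subset of $S$); choosing $g$ to agree with $f$ outside $I$ and to map $I$ bijectively onto $S \setminus f(I)$ gives $f(I) \cup g(I) = S \in \mathcal M$, and one can arrange (e.g.\ by picking $I$ minimal in some fixed order, or by being slightly more careful so that criterion~(1) of complementation — \emph{exactly} one such $I$ — holds) that this is a legitimate complement. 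Since each good $f$ participates in at least one pair $(f,g) \in \mathcal P$, we get $|\mathcal P| \geq N/2 = \Omega(N)$.

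Next, the column bound: each column of $A$, indexed by a map $g$, has at most $s^2 (r!)$ ones, i.e.\ at most $s^2(r!)$ maps $f$ complement $g$ with $(f,g) \in \mathcal P$. Here is where Propositions~\ref{prop:equalcover} and~\ref{prop:phig} come in — actually just Proposition~\ref{prop:phig}. If $(f,g) \in \mathcal P$, then $g$ is $s^2$-good by Proposition~\ref{prop:phig}, so $\phi(g) \leq s^2$. Now I count the $f$'s that complement a \emph{fixed} $g$: by the definition of complementation (criteria (1) and (2)), $f$ is obtained from $g$ by picking an $r$-subset $I \in \binom{[m]}{r}$ with $g(I) \subseteq S$, $|g(I)| = r$ for some $S \in \mathcal M$, and then redefining $f$ on $I$ to be some bijection from $I$ onto $S \setminus g(I)$ (an $r$-element set). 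The number of valid choices of $I$ is exactly the number of $r$-subsets $I$ of $[m]$ with $|g(I) \cap S| = r$ summed over $S \in \mathcal M$, which is precisely $\sum_{S \in \mathcal M}\mu_S(g) = \phi(g) \leq s^2$. For each such $(I, S)$, the number of ways to choose the bijection $f|_I : I \to S\setminus g(I)$ is $r!$. Hence the number of $f$ with $(f,g)\in\mathcal P$ is at most $s^2 \cdot r!$. By symmetry of the complementation relation (``if $g$ complements $f$ then the converse also holds'', noted in the text) and the fact that $(f,g)\in\mathcal P$ forces $f$ to be $s$-good hence $\phi(f)\le s\le s^2$, the same argument bounds the number of ones in each row.

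The main obstacle — or at least the only point requiring genuine care rather than bookkeeping — is making criterion~(1) of complementation (existence of \emph{exactly} one such $I$) interact cleanly with the counting. In the column-bound argument this is harmless: demanding uniqueness of $I$ only \emph{decreases} the count of complements, so the upper bound $s^2(r!)$ still holds. In the lower-bound argument for $|\mathcal P|$ it is slightly more delicate, since I need each good $f$ to have \emph{at least one} genuine complement (with the uniqueness built in); but since $\phi(f)\ge 1$ guarantees at least one covering pair $(I,S)$, and one can choose $g$ on $I$ generically enough that no \emph{other} $r$-subset $I'$ becomes a covering index for the pair $(f,g)$ — or, if worried, restrict to the subclass of $f$ with $\phi(f)=1$, which Lemma~\ref{lem:bday}'s proof shows is still $\Omega(N)$ — the bound $|\mathcal P|\ge\Omega(N)$ survives. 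Everything else is a routine substitution of the definitions.
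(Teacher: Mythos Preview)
Your approach is the same as the paper's, which also bounds $|\mathcal P|$ below by the number of $s$-good maps and bounds rows/columns by $\phi(f)\cdot r!$ and $\phi(g)\cdot r!$ via Proposition~\ref{prop:phig}. The counting for the row and column bounds is exactly right.

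The only place you hedge is the ``exactly one $I$'' criterion in the lower bound, and here your proposed fixes are both unnecessary and, in the second case, unjustified. The fallback of restricting to $\phi(f)=1$ is not actually supplied by Lemma~\ref{lem:bday}: its proof only gives $\Pr[\phi(h)=0]\le 1/4$ and $\Pr[\phi(h)>s]\le 1/4$, not that $\Pr[\phi(h)=1]$ is bounded away from zero. Fortunately no fix is needed: once you pick any $I$ with $g(I)=f(I)$-complement in $S$, criterion~(1) holds automatically. For $I'\ne I$ with $|I'|=r$, write $I'=(I'\cap I)\cup(I'\setminus I)$; since $g=f$ on $I'\setminus I$, the set $f(I')\cup g(I')$ has size at most
\[
2|I'\cap I| + |I'\setminus I| \;=\; r + |I'\cap I| \;\le\; 2r-1,
\]
so it cannot lie in the $2r$-matching~$\mathcal M$. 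Hence every $s$-good $f$ has at least one genuine complement, and $|\mathcal P|\ge N/2$ follows directly, as the paper asserts without further comment.
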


\begin{proof}
The first claim follows from Lemma~\ref{lem:bday} and the fact that $|\mathcal P|$ is at least the number of~$s$-good mappings.
If~$h$ is $l$-good, then there are at most~$l(r!)$ mappings from $[m]\to [n]$ that complement~$h$.
Hence, every row of~$A$ has at most~$s(r!)$ ones and by Proposition~\ref{prop:phig}, every column of~$A$ has at most~$s^2(r!)$ ones.
%It therefore follows from the Riesz--Thorin Interpolation Theorem\jnote{Seems like overkill; alternatively we could use Birkhoff--von Neumann, but I can't find a version in the literature that talks about doubly-\emph{sub}stochastic matrices.} that~$\|A\| \leq t^{2}(r!)$.
\end{proof}

With this, we can now prove Lemma~\ref{lem:main}.

\begin{proof}[ of Lemma~\ref{lem:main}]
Let $t = \Delta(H)$.
By Lemma~\ref{lem:chromatic_number}, $H$ can be decomposed into $\chi_E(H)\le 2rt$ matchings, which we denote by $\cF_1,\dots,\cF_{\chi_E(H)}$. Complete each~$\mathcal F_i$ to a maximal family~$\mathcal M_i$ of disjoint $2r$-subsets of~$[n]$ in some arbitrary way.
For each~$\mathcal M_i$, let~$\mathcal P_i$ be as in~\eqref{eq:Pdef}
and let $A_i:[n]^m\times [n]^m\to\bset{n}$ be its incidence matrix.
Set to zero all the entries of~$A_i$ that correspond to a pair $(f,g)$ covering a set in $\mathcal M_i\smallsetminus \mathcal F_i$.
Let $B = A_1 + \dots + A_{\chi_E(H)}$ and $A =(B + B^{\mathsf T})$.
It follows from~\eqref{eq:fgx} and Proposition~\ref{prop:equalcover} that for each $x\in\pmset{n}$, we have
\beq\label{eq:xAxPMx}
\Big\langle \sum_{i=1}^{\chi_E(H)} (A_i + A_i^{\mathsf T})x^{\otimes  m},  x^{\otimes m}\Big\rangle
=
2\sum_{i=1}^{\chi_E(H)} \frac{|\mathcal P_i|}{|\mathcal M_i|} \sum_{S\in \mathcal F_i} \prod_{j\in S}x_i.
\eeq
Since all $\mathcal M_i$ are maximal, they have the same size, as do the $\mathcal P_i$.
Hence, by Corollary~\ref{cor:PA}, there exists a constant $c_r\in (0,1]$ such that the right-hand side of~\eqref{eq:xAxPMx} equals $(2c_rN/n) p_H(x)$.
Let~$G$ be the graph with adjacency matrix~$A$, allowing for parallel edges.
Then~$G$ has degree at most~$2ts^2(r!)$.
It follows from Lemma~\ref{lem:chromatic_number} that~$G$ can be partitioned into~$O_r(t)$ matchings.
Since the adjacency matrix of a matching has unit norm, we get that~$\|A\| \leq O_r(t)$.
\end{proof}

\section{Proof of the generalized birthday paradox.}\label{sec:bday}

For the proof of Lemma~\ref{lem:bday}, we use a standard Poisson approximation result for ``balls and bins'' problems \cite[Theorem~5.10]{Mitzenmacher:2005}.
A discrete Poisson random variable~$Y$ with expectation~$\mu$ is nonnegative, integer valued, and has probability density function
\beq\label{eq:poisson}
\Pr[Y = \ell] = \frac{e^{-\mu}\mu^\ell}{\ell!},
\quad\quad
\forall \ell=0,1,2,\dots
\eeq

\begin{proposition}\label{prop:poisson-sum}
If $X,Y$ are independent Poisson random variables with expectations~$\mu_X,\mu_Y$, respectively, then $X+Y$ is a Poisson random variable with expectation $\mu_X + \mu_Y$.
\end{proposition}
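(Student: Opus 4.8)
The plan is to verify this by a direct computation of the probability density function of $X+Y$ using the convolution formula, together with the binomial theorem. This is the standard textbook argument, and it is short.

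First I would fix an integer $\ell \geq 0$ and write, using independence of $X$ and $Y$ and the fact that both are nonnegative and integer valued,
\beqn
\Pr[X + Y = \ell] = \sum_{j=0}^{\ell} \Pr[X = j]\,\Pr[Y = \ell - j].
\eeqn
Substituting the Poisson density~\eqref{eq:poisson} for each factor, the right-hand side becomes
\beqn
\sum_{j=0}^{\ell} \frac{e^{-\mu_X}\mu_X^{\,j}}{j!}\cdot \frac{e^{-\mu_Y}\mu_Y^{\,\ell-j}}{(\ell-j)!}
=
\frac{e^{-(\mu_X + \mu_Y)}}{\ell!} \sum_{j=0}^{\ell} \binom{\ell}{j} \mu_X^{\,j}\mu_Y^{\,\ell-j},
\eeqn
where I pulled out the common factor $e^{-(\mu_X+\mu_Y)}/\ell!$ and multiplied and divided by $\ell!$ inside the sum to form the binomial coefficient $\binom{\ell}{j} = \ell!/(j!(\ell-j)!)$.

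Next I would apply the binomial theorem to the inner sum, giving $\sum_{j=0}^{\ell}\binom{\ell}{j}\mu_X^{\,j}\mu_Y^{\,\ell-j} = (\mu_X+\mu_Y)^\ell$. Hence
\beqn
\Pr[X+Y = \ell] = \frac{e^{-(\mu_X+\mu_Y)}(\mu_X+\mu_Y)^\ell}{\ell!},
\eeqn
which is exactly the density~\eqref{eq:poisson} of a Poisson random variable with expectation $\mu_X + \mu_Y$. Since $X+Y$ is clearly nonnegative and integer valued, this identifies its distribution and completes the proof. There is no real obstacle here; the only point requiring any care is bookkeeping the factorials so that the binomial coefficient appears correctly, and noting that independence is what licenses the convolution formula in the first step. (An alternative one-line proof via probability generating functions or moment generating functions—$\E[z^{X+Y}] = \E[z^X]\E[z^Y] = e^{\mu_X(z-1)}e^{\mu_Y(z-1)} = e^{(\mu_X+\mu_Y)(z-1)}$—would also work, but the convolution computation is more self-contained.)
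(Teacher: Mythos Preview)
Your argument is correct: the convolution of the two Poisson densities together with the binomial theorem gives exactly the Poisson density with parameter $\mu_X+\mu_Y$, and independence is precisely what justifies the convolution formula. The paper itself does not supply a proof of this proposition---it is stated as a standard fact and left without argument---so your write-up in fact provides more detail than the paper does, and there is nothing further to compare.
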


\begin{lemma}\label{lem:poisson2}
Let~$h$ be a uniformly distributed map from~$[m]$ to~$[n]$.
For each~$i\in[n]$, let~$X_i = |h^{-1}(i)|$ and
let~${\bf X} = (X_i)_{i\in [n]}$.
Let~${\bf Y} = (Y_i)_{i\in [n]}$ be a vector of independent Poisson random variables with expectation $m/n$.
Then, for any nonnegative function $\Phi:(\N\cup\{0\})^n\to \R_+$ such that~$\Exp[\Phi({\bf X})]$  decreases or increases monotonically with~$m$, we have
\beqn
\Exp[\Phi({\bf X})] \leq 2 \Exp[\Phi({\bf Y})].
\eeqn
\end{lemma}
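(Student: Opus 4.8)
The plan is to prove Lemma~\ref{lem:poisson2} by the classical coupling between the balls-and-bins occupancy vector and a vector of independent Poissons conditioned on their sum; this is exactly the mechanism behind the ``monotone'' form of the Poisson approximation principle (see \cite[Corollary~5.11]{Mitzenmacher:2005}). First I would record the conditioning identity. By Proposition~\ref{prop:poisson-sum}, the sum $S := \sum_{i\in[n]} Y_i$ is a Poisson random variable with expectation $m$. A direct computation with the mass function~\eqref{eq:poisson} shows that for every integer $\ell \ge 0$ and every $(k_1,\dots,k_n)$ with $\sum_i k_i = \ell$,
\[
\Pr\Big[\mathbf{Y} = (k_1,\dots,k_n) \,\Big|\, S = \ell\Big] = \binom{\ell}{k_1,\dots,k_n}\, n^{-\ell},
\]
which is precisely the law of the occupancy vector produced by throwing $\ell$ balls uniformly and independently into $n$ bins. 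Write $\mathbf{X}^{(\ell)}$ for a random vector with that law, so that the $\mathbf{X}$ of the lemma equals $\mathbf{X}^{(m)}$ and, for any nonnegative $\Phi$, $\Exp[\Phi(\mathbf{Y}) \mid S = \ell] = \Exp[\Phi(\mathbf{X}^{(\ell)})]$; in particular the hypothesis that $\Exp[\Phi(\mathbf{X})]$ is monotone in $m$ is literally the statement that $\ell \mapsto \Exp[\Phi(\mathbf{X}^{(\ell)})]$ is monotone.

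Next I would condition on the value of $S$. Using the previous identity,
\[
\Exp[\Phi(\mathbf{Y})] = \sum_{\ell \ge 0} \Exp\big[\Phi(\mathbf{X}^{(\ell)})\big]\,\Pr[S = \ell].
\]
Assume first that $\ell \mapsto \Exp[\Phi(\mathbf{X}^{(\ell)})]$ is nondecreasing. Discarding the terms with $\ell < m$ and using $\Exp[\Phi(\mathbf{X}^{(\ell)})] \ge \Exp[\Phi(\mathbf{X}^{(m)})]$ for $\ell \ge m$ gives
\[
\Exp[\Phi(\mathbf{Y})] \ge \Exp[\Phi(\mathbf{X})]\cdot \Pr[S \ge m] \ge \tfrac12\,\Exp[\Phi(\mathbf{X})],
\]
where the last step uses that a Poisson random variable of positive integer mean $m$ has median $m$, so $\Pr[S \ge m] \ge 1/2$. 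In the nonincreasing case one instead keeps the terms with $\ell \le m$ and uses $\Pr[S \le m] \ge 1/2$, which holds for the same reason. Rearranging yields $\Exp[\Phi(\mathbf{X})] \le 2\,\Exp[\Phi(\mathbf{Y})]$, as claimed.

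The only ingredient beyond elementary manipulation is the fact that $\Pr[\mathrm{Poisson}(m) \ge m] \ge 1/2$ (and likewise $\Pr[\mathrm{Poisson}(m)\le m]\ge 1/2$) for every positive integer $m$; this is the classical statement that the median of $\mathrm{Poisson}(\lambda)$ differs from $\lambda$ by less than $\max(\ln 2, 1/3)$, hence equals $\lambda$ when $\lambda$ is a positive integer. (If one prefers a self-contained route, it follows from the identity $\Pr[\mathrm{Poisson}(m)\le m-1] = \Pr[\Gamma(m,1) > m]$ and the fact that the mean of a Gamma distribution exceeds its median.) I do not expect a genuine obstacle: the multinomial conditioning identity and the truncation of the sum are routine, and the Poisson median fact is standard and can simply be cited.
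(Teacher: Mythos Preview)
Your proof is correct and is precisely the standard argument behind the Poisson approximation principle for monotone functionals. The paper does not supply its own proof of this lemma; it simply cites it as \cite[Theorem~5.10]{Mitzenmacher:2005} (whose Corollary~5.11 is exactly what you have reproduced). So there is nothing to compare---your write-up is the proof that the cited reference contains, namely the conditioning identity $\mathbf{Y}\mid S=\ell \sim \mathbf{X}^{(\ell)}$, truncation of the sum at $\ell=m$ using monotonicity, and the Poisson median fact $\Pr[\mathrm{Poisson}(m)\ge m]\ge 1/2$ (and its $\le$ counterpart) for positive integers~$m$.
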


%We have the following tail bound for Poisson random variables~\cite[Theorem~5.4]{Mitzenmacher:2005}.
%
%\begin{lemma}[Tail bound for Poisson random variables]\label{lem:poisson-tail}
%Let~$Y$ be a Poisson random variable with expectation~$\mu> 0$.
%Then, for any~$m> \mu$, we have
%\beqn
%\Pr[Y \geq m] \leq \frac{e^{-\mu} (e\mu)^m}{m^m}.
%\eeqn
%\end{lemma}

\begin{proof}[ of Lemma~\ref{lem:bday}]
Let~$C_r> 0$ be a parameter depending only on $r$ to be set later.
Let $\mu = C_rm/n = C_rn^{-1/r}$ and assume that $n \geq n_0(r):=4(C_rr)^r$.
For $h$ a random map as in Lemma~\ref{lem:poisson2}, we begin by lower bounding the probability of the event that $\phi(h) \geq 1$.
Recall that this occurs if there exists an $S\in\mathcal M$ and an $r$-subset $T\in {S\choose r}$ such that $T\subseteq \im(h)$.
%We consider an indicator function for the event that this does not occur.
Let~${\bf X}$ be as in Lemma~\ref{lem:poisson2}.
% and observe that
%\beqn
%1 - \prod_{v\in T} 1_{\geq 1}(X_v)
%\eeqn
%is the indicator random variable for the event that~$T$ is not covered by the elements present in~${\bf s}$.
Let $\psi:(\N\cup\{0\})^n\to\bset{}$ be the function
\beqn
\psi(x) = 
%\prod_{S\in\mathcal F}
\prod_{S\in\mathcal M}\prod_{T\in {S\choose r}}\Big(1 - \prod_{i\in T}1_{\geq 1}(x_i)\Big).
\eeqn
Then $\psi({\bf X})=1$ if $\phi(h) = 0$ and~$\psi({\bf X})$ decreases monotonically with~$m$.
Hence, for~${\bf Y}$ a Poisson random vector as in Lemma~\ref{lem:poisson2}, we have
\begin{align}
\Pr[\phi(h)=0] &= \Exp[\psi({\bf X})]\nonumber\\
&\leq
2\Exp[\psi({\bf Y})]\nonumber\\
&=
%\prod_{S\in\mathcal F} 
2\prod_{S\in\mathcal M}\Exp\Big[\prod_{T\in {S\choose r}}\Big(1 - \prod_{i\in T}1_{\geq 1}(Y_i)\Big)\Big],\label{eq:prod_prob}
\end{align}
where in the last line we used the fact that since the sets~$S\in\mathcal M$ are disjoint, the random variables
\beqn
\prod_{T\in {S\choose r}}\Big(1 - \prod_{i\in T}1_{\geq 1}(Y_i)\Big)
\eeqn
are independent.
The random variables $1_{\geq 1}(Y_i)$, $i\in S$, are independent Bernoullis that are zero with probability $e^{-\mu}$.
The expectation in~\eqref{eq:prod_prob} equals the probability that these random variables form a string of Hamming weight strictly less than~$r$.
Using that $n\ge 4(C_rr)^r$ and the fact that $1-x\le \exp(-x)\le 1-x+x^2/2$ when $x>0$, this probability is at most
\beqn
1-\Pr[\forall i\in T\ 1_{\ge 1}(Y_i)=1]
=
1 - (1 - e^{-\mu})^r
\leq
1-(\mu(1-\mu/2))^r
\leq
1-\frac{C_r^r}{en}
\leq
\exp\Big(-\frac{C_r^r}{en}\Big)
\eeqn
where $T\subset S$ is some fixed subset of size $r$.
Hence, since~$\mathcal M$ is maximal, the above and \eqref{eq:prod_prob} give
\beq\label{eq:hzero}
\Pr[\phi(h) = 0] \leq 2\exp\left(-\frac{C_r^r|\mathcal M|}{en}\right) \leq 2\exp\left(-\frac{C_r^r\floor{n/r}}{en}\right) \leq 2\exp\left(-\frac{C_r^r}{2er}\right).
\eeq
%We get that~\eqref{eq:prod_prob} is at most $e^{-1}$ and
%it follows that 
%\beq\label{eq:nu_small}
%\Pr\big[\nu_{\mathcal F}({\bf s}) =0\big] \leq e^{-1}.
%\eeq
Set~$C_r=(6er)^{1/r}$, then the above right-hand side is at most~$1/4$.
Next, we upper bound the probability that $\phi(h)\geq s=200\cdot 4^r$.
Define $\chi:(\N\cup\{0\})^n\to \R_+$ by
\beqn
\chi(x) = \sum_{S\in\mathcal M}\sum_{T\in {S\choose r}} \prod_{i\in T} x_i.
\eeqn
Then, $\phi(h) = \chi({\bf X})$.
Moreover, $\Exp[\chi({\bf X})]$ increases monotonically with~$m$.
It thus follows from Lemma~\ref{lem:poisson2} that
\begin{align*}
\Exp[\phi(h)] 
\leq
2\Exp[\chi({\bf Y})]
&=
2\sum_{S\in \mathcal M}\sum_{T\in {S\choose r}} \prod_{i\in T} \Exp[Y_i]\\
&\leq
2|\mathcal M|{2r\choose r}\Big(\frac{m}{n}\Big)^r
\leq 2\cdot \frac{n}{r} \cdot 4^r \cdot (6er)n^{-1}\leq
50\cdot 4^r.
\end{align*}
where in the second line we used the fact that the $Y_i$ are independent.
By Markov's inequality,
%\beq\label{eq:nu_big}
$\Pr[\phi(h) > 200\cdot 4^r] \leq \tfrac{1}{4}$.
%\eeq
With~\eqref{eq:hzero}, we get that~$h$ is $s$-good with probability at least $1/2$.
\end{proof}

\section{Random differences in Szemer\'edi's Theorem}
\label{sec:intersective} 
In this section we prove Theorem~\ref{thm:intersective}. 
We first consider a slightly different random model where we form a random multiset $D_k$ of size $k$ by repeatedly sampling a uniformly random element from $\Z/N\Z$.
%We will prove in Proposition~\ref{prop:intersective_Dk} that when $k=\omega(N^{1-1/\ceil{(\ell+1)/2}}\log N)$, $D_k$ is $\ell$-intersective w.p. $1-o(1)$. Theorem~\ref{thm:intersective} follows easily from Proposition~\ref{prop:intersective_Dk}.
%Szemer\'edi's theorem states that for every $\alpha>0,\ell\in \N$ and $N$ sufficiently large depending on $\alpha,\ell$, every subset $A\subset [N]$ of size at least $\alpha N$ will contain a non-trivial arithmetic progression of length $\ell+1$. It will imply easily that dense sets should contain a positive proportion of all arithmetic progressions.
We will need the following equivalent formulation of Szemer\'edi's Theorem due to Varnavides~\cite{Varnavides:1959} (see~\cite[Theorem~4.8]{Tao:2007} for this exact formulation).

\begin{proposition}
\label{prop:Szemeredi_thm}
For every $\ell\in \N, \alpha\in (0,1]$ there exists $N_1(\ell, \alpha), \epsilon(\ell, \alpha)$ such that for every $N\ge N_1(\ell, \alpha)$, the following holds. Every subset $A\subseteq \Z/N\Z$ of size at least $\alpha N$ contains an $\epsilon(\ell, \alpha)$-fraction of all $\ell+1$ term arithmetic progressions in $\Z/N\Z$, that is, 
$$\E_{x \in \Z/N\Z, y \in \Z/N\Z\smallsetminus\{0\}}[\indic_A(x)\indic_A(x+y)\dots\indic_A(x+\ell y)]\ge \epsilon(\ell, \alpha).$$
\end{proposition}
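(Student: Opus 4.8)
\emph{Proof proposal.} This is the classical supersaturation (Varnavides) deduction from the ordinary finitary Szemer\'edi theorem, and that is the route I would take. Fix $\ell$ and $\alpha$, and let $M = M_0(\ell,\alpha/2)$ be a length, depending only on $\ell$ and $\alpha$, such that every subset of $\{0,1,\dots,M-1\}$ of size at least $(\alpha/2)M$ contains a proper $(\ell+1)$-term arithmetic progression; such an $M$ exists by Szemer\'edi's theorem. For $a,b\in\Z/N\Z$ consider the length-$M$ configuration obtained from the map $i\mapsto a+ib$ on $\{0,1,\dots,M-1\}$, viewed as an (a priori non-injective) affine copy of a short interval inside $\Z/N\Z$. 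I would take $N\ge N_1(\ell,\alpha)$ with $N_1$ a suitable absolute-constant multiple of $M^2/\alpha$.

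The first step is a first-moment count: for each fixed index $i$ the value $a+ib$ is uniform over $\Z/N\Z$ as $(a,b)$ varies, so $\sum_{a,b}\#\{i : a+ib\in A\}=NM|A|\ge\alpha M N^2$, and since every term is at most $M$ this forces at least $(\alpha/2)N^2$ of the $N^2$ pairs $(a,b)$ to be \emph{good}, meaning $\#\{i\in\{0,\dots,M-1\} : a+ib\in A\}\ge(\alpha/2)M$. For each good pair the set of hit indices has density at least $\alpha/2$ in $\{0,\dots,M-1\}$, so by the choice of $M$ it contains a proper $(\ell+1)$-term progression $i_0,i_0+j,\dots,i_0+\ell j$ with $j\ge 1$; pushing this forward through $i\mapsto a+ib$ yields $x=a+i_0b$ and $y=jb$ with $x+ky=a+(i_0+kj)b\in A$ for all $k=0,\dots,\ell$.

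The only point requiring care --- and the place I expect the main (very mild) obstacle, since $\Z/N\Z$ need not be a field --- is that $y=jb$ must be nonzero in order to be counted by the average over $y\in\Z/N\Z\setminus\{0\}$. Now $jb\equiv 0\pmod N$ with $1\le j<M$ forces $\gcd(b,N)>N/M$, and an elementary divisor count ($g=\gcd(b,N)$ ranges over at most $M-1$ divisors of $N$, each realized by at most $N/g<M$ values of $b$) shows there are fewer than $M^2$ such $b$, hence at most $M^2N$ good pairs that could fail. For $N\ge N_1(\ell,\alpha)$ this leaves at least $(\alpha/4)N^2$ good pairs producing an $(\ell+1)$-term progression with nonzero common difference lying entirely in $A$. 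Finally, a fixed pair $(x,y)$ arises from at most $M^3$ good pairs $(a,b)$: choose $j<M$, then $b$ among the at most $M$ solutions of $jb=y$, then $i_0<M$, which determines $a=x-i_0b$. Hence $A$ contains at least $(\alpha/4)N^2/M^3$ progressions $(x,x+y,\dots,x+\ell y)$ with $y\neq 0$, and dividing by $N(N-1)$ and using $N^2\ge N(N-1)$ gives the statement with $\epsilon(\ell,\alpha)=\alpha/(4M^3)$. The only substantive input is Szemer\'edi's theorem itself; everything else is counting, the zero-divisor bookkeeping being the sole cost of working over a composite (rather than prime) modulus.
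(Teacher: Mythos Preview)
Your argument is correct and is precisely the standard Varnavides supersaturation argument. The paper does not actually prove this proposition: it simply states it and cites Varnavides~\cite{Varnavides:1959} and~\cite[Theorem~4.8]{Tao:2007} for this formulation. So you have done more than the paper does here.

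A couple of minor remarks on your write-up. Your first-moment/pigeonhole step and the overcount bound of $M^3$ are both fine; the only place one might tighten is the solution count for $jb=y$, where you use ``at most $M$ solutions'' --- the exact count is $\gcd(j,N)$, and since $1\le j<M$ this is indeed $<M$, so the bound holds. Your handling of the degenerate case $y=0$ via the divisor count is the right way to deal with composite $N$; for prime $N$ (which is all the paper ever uses downstream) one could alternatively just discard $b=0$ at the outset, but your argument covers the general case cleanly.
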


\begin{proposition}\label{prop:intersective_Dk}
For all $\ell\in \N, \alpha\in (0,1]$ there exists $N_1(\ell, \alpha)\in\N$ such that for every $N>N_1(\ell, \alpha)$ the following holds. 
Let $k\geq\omega(N^{1-1/\ceil{(\ell+1)/2}}\log N)$ and
let $D$ be a random multiset of size $k$ obtained by sampling~$k$ times independently and uniformly at random from $\Z/N\Z\setminus \{0\}$. Then, with probability $1-o(1)$,  every subset $A\subseteq \Z/N\Z$ of size at least ~$\alpha N$ contains a proper arithmetic progression of length $\ell+1$ with common difference in $D$.
\end{proposition}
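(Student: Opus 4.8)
The plan is to deduce the proposition from Theorem~\ref{thm:main} by a symmetrization argument. For a nonzero $y\in\Z/N\Z$ whose order exceeds $\ell$, let $F_y\in\R[v_1,\dots,v_N]$ be the multilinear polynomial $F_y(v)=\sum_{x\in\Z/N\Z}v_xv_{x+y}\cdots v_{x+\ell y}$, and set $F_y\equiv 0$ for all other $y$ (the progression $x,x+y,\dots,x+\ell y$ is proper if and only if $y$ has order $>\ell$, independently of~$x$). Then $F_y$ has $0$-$1$ coefficients, degree at most $\ell+1$, and multiplicity at most $\ell+1$, since the variable $v_z$ appears only in the monomials indexed by $x\in\{z,z-y,\dots,z-\ell y\}$. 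Writing $D=\{y_1,\dots,y_k\}$ and $f_A(y):=\tfrac1N F_y(\1_A)$ for $A\subseteq\Z/N\Z$, the map $\psi_D:\R^N\to\R^k$ with coordinates $F_{y_1},\dots,F_{y_k}$ satisfies the hypotheses of Theorem~\ref{thm:main} with $n=N$, $d=\ell+1$, $t=\ell+1$ for every outcome of~$D$, while $Nf_A(y)$ is exactly the number of proper $(\ell+1)$-term APs in $A$ with common difference~$y$.

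The multiset $D$ fails to contain a valid common difference for~$A$ precisely when $\sum_{i=1}^k f_A(y_i)=0$. As $\E_D\sum_i f_A(y_i)=k\bar f_A$ with $\bar f_A:=\E_{y\in\Z/N\Z\setminus\{0\}}[f_A(y)]$, everything comes down to bounding the deviation $\Psi:=\sup_{A:\,|A|\ge\alpha N}\big|\sum_{i}(f_A(y_i)-\bar f_A)\big|$. The Varnavides form of Szemer\'edi's theorem (Proposition~\ref{prop:Szemeredi_thm}) gives $\bar f_A\ge\epsilon(\ell,\alpha)-O_\ell(1/N)\ge\epsilon(\ell,\alpha)/2=:\epsilon'$ for $|A|\ge\alpha N$ and $N\ge N_1(\ell,\alpha)$, the error term absorbing the at most $\binom{\ell+1}{2}$ improper differences that were zeroed out. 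For the deviation, the crucial point is that a crude union bound over the $2^N$ sets~$A$ is far too weak, but $\Psi$ is controlled by a Gaussian width: the standard symmetrization inequality and the comparison of Rademacher with Gaussian averages give, with $g=(g_1,\dots,g_k)$ a standard Gaussian vector, $\E_D\Psi \lesssim \E_{D,g}\sup_{A:\,|A|\ge\alpha N}\big|\sum_i g_i f_A(y_i)\big| = \tfrac1N\,\E_{D,g}\sup_{A:\,|A|\ge\alpha N}\big|\langle g,\psi_D(\1_A)\rangle\big| \le \tfrac2N\,\E_D\big[\gw\big(\psi_D(\bset{N})\big)\big]$, where the last step enlarges the supremum to all of $\bset{N}$ and uses $0\in\psi_D(\bset{N})$ together with the symmetry of~$g$.

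By Theorem~\ref{thm:main} applied to each outcome $\psi_D$, the right-hand side is $O_\ell\big(\sqrt{kN^{1-1/\ceil{(\ell+1)/2}}\log N}\big)$, which equals $k\cdot o(1)$ because $k\ge\omega\big(N^{1-1/\ceil{(\ell+1)/2}}\log N\big)$. Thus $\E_D\Psi=o(k)$ and Markov's inequality yields $\Psi<k\epsilon'/2$ with probability $1-o(1)$. On this event every $A$ with $|A|\ge\alpha N$ satisfies $\sum_i f_A(y_i)=k\bar f_A-\sum_i(\bar f_A-f_A(y_i))\ge k\epsilon'-\Psi>k\epsilon'/2>0$, so some $y_i\in D$ is the common difference of a proper $(\ell+1)$-term progression inside~$A$; this is the assertion of the proposition.

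The only genuine obstacle is the passage from ``a fixed dense~$A$ is served by~$D$ except with probability roughly $e^{-\epsilon' k}$'' --- useless against $2^N$ sets in the regime $k=N^{1-\Theta(1)}$ --- to the uniform control of $\Psi$, and this is supplied wholesale by Theorem~\ref{thm:main}. The remaining steps (checking that each $F_{y_i}$ has constant multiplicity and degree $\ell+1$, which hinges only on the observation that properness of the progression from~$x$ depends on~$y$ alone, and running the symmetrization/comparison chain with its absolute constants) are routine.
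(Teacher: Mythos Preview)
Your argument is correct and follows essentially the same route as the paper: Varnavides to get a density lower bound $\bar f_A\gtrsim\epsilon$, symmetrization and Rademacher--Gaussian comparison to reduce the uniform deviation $\Psi$ to a Gaussian width, and then Theorem~\ref{thm:main} applied to the degree-$(\ell+1)$, bounded-multiplicity polynomials $F_{y_i}$. The only cosmetic differences are that you give a direct proof (Markov on $\Psi$) where the paper argues by contradiction, and you handle the improper-difference issue explicitly by zeroing out $F_y$ for $y$ of small order, whereas the paper leaves this implicit.
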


\begin{proof}
We will arrive at a contradiction assuming that the statement is false.
Let $\Gamma = \Z/N\Z$.
For $f:\Gamma\to\R$ and $y\in \Gamma\setminus\{0\}$, define
\beqn
\phi_y(f)= \Exp_{x\in \Gamma}[f(x)f(x+y)\dots f(x+\ell y)],
\eeqn
which is a degree $\ell+1$ polynomial over the variables $(f(x))_{x\in \Gamma}$. 
For a multiset $S\subseteq \Gamma\setminus\{0\}$, define
\beqn
\Lambda_S(f) = \frac{1}{|S|}\sum_{y\in S}\phi_y(f).
\eeqn
If~$f = \indic_A$, then this counts the fraction of proper $(\ell+1)$-term APs with common difference in~$S$ that lie completely in $A$.
%\gnote{If~$f = \indic_A$, then this counts the fraction of proper $(\ell+1)$-term APs with common difference in~$S$ that lie in $A$.}
Note that $\Exp_D[\Lambda_D(f)] = \Lambda_{\Gamma\setminus\{0\}}(f)$.

Let $N_1(\ell, \alpha)$ and $\epsilon(\ell, \alpha)$ be as in Proposition~\ref{prop:Szemeredi_thm}. Suppose that with a constant probability, there is a subset $A\subseteq \Gamma$ of size at least~$\alpha N$ with no proper $(\ell+1)$-term APs whose common difference lies in $D$. 
Then,
\beqn
\Pr_D\Big[\inf_{A:|A|\ge \alpha N} \Lambda_D(\indic_A)=0\Big]=\Omega(1).
\eeqn
By Proposition~\ref{prop:Szemeredi_thm}, for every $A\subseteq \Gamma$ of size at least $\alpha N$, we have that $\Lambda_{\Gamma\setminus\{0\}}(\indic_A) \geq \epsilon$.
%$$\E_{y\in \Gamma\setminus \{0\}} \E_{x\in \Gamma}[\indic_A(x)\indic_A(x+y)\dots\indic_A(x+\ell y)]\ge \epsilon.$$ 
We are going to apply a standard symmetrization trick to establish a connection with Gaussian width.
Let~$D'$ be an independent copy of~$D$.
Then,
\begin{align*}
\epsilon &\lesssim \E_D\Big[\sup_{A:|A|\ge \alpha N} \left|\Lambda_D(\indic_A) - \Lambda_{\Gamma\setminus\{0\}}(\indic_A)\right|\Big]\\
&= \E_D\Big[\sup_{A:|A|\ge \alpha N} \big|\Lambda_D(\indic_A) - \Exp_{D'}[\Lambda_{D'}(\indic_A)]\big|\Big] \\
%&\hspace{6cm}\text{($D'$ has the same distribution as $D$)}\\
&\le  \E_{D,D'}\Big[\sup_{A:|A|\ge \alpha N} \big|\Lambda_D(\indic_A) - \Lambda_{D'}(\indic_A)\big|\Big]\\
&=
\Exp_{y_1,\dots,y_k,y_1',\dots,y_k'\in \Gamma\setminus\{0\}}
\Big[
\sup_{A:|A|\ge \alpha N} \Big|\frac{1}{k}\sum_{i=1}^k\phi_{y_i}(1_A) - \phi_{y_i'}(1_A)\Big|
\Big]
\end{align*}
Observe that for i.i.d.\ random $y,y'\in \Gamma\setminus\{0\}$, the random variable $\phi_{y}(1_A) - \phi_{y'}(1_A)$ is symmetric in the sense that it has the same distribution as its negation.
Let~$\sigma_1,\dots,\sigma_k$ be independent uniformly distributed~$\pmset{}$-valued random variables. % and let~$D'$ be an independent copy of~$D$.
Then it follows from the above that
\begin{align*}
\epsilon &\lesssim  \E_{y_1,\dots,y_k\ y_1',\dots,y_k'\in \Gamma\setminus\{0\}}\E_\sigma\Big[\sup_{A:|A|\ge \alpha N} \Big|\frac{1}{k} \sum_{i=1}^{k} \sigma_i\left(\phi_{y_i}(\indic_A) - \phi_{y_i'}(\indic_A)\right)\Big|\Big] \\
%&\hspace{4cm}\text{(where $\sigma_i$ are independent Rademacher random variables)}\\
&\le  2\E_{y_1,\dots,y_k\in \Gamma\setminus\{0\}}\E_\sigma\Big[\sup_{A:|A|\ge \alpha N} \Big|\frac{1}{k} \sum_{i=1}^{k} \sigma_i\phi_{y_i}(\indic_A)\Big|\Big]. \\
\end{align*}
Let us fix $y_1,\dots,y_k\in \Gamma\setminus \{0\}$.  Each $\phi_{y_i}$ can be written as $\phi_{y_i}=N^{-1}p_{H_i}$ (as in \eqref{eq:pHdef}) where~$H_i$ is the hypergraph on $\Gamma$ whose edges are given by $(\ell+1)$ term arithmetic progressions with common difference~$y_i$. The maximum degree of $H_i$ is $O(\ell)$. This is because each such AP $(x+ty_i)_{0\le t\le \ell}$ intersects another AP $(x'+t'y_i)_{0\le t'\le \ell}$ iff $x-x'=(t'-t)y_i$; so there are only~$O(\ell)$ such~$x'$ for a given~$x$. 
%Therefore, by applying Theorem~\ref{thm:main}, we get $$\gw(\psi_{H_1,\dots,H_k}(\bset{n}))\lesssim_{\ell} \sqrt{kN^{1-1/\ceil{(\ell+1)/2}}\log N}.$$ 
Let $g_1,\dots,g_k$ be independent~$N(0,1)$ random variables.
Then we can bound
\begin{align*}
\E_\sigma\Big[\sup_{A:|A|\ge \alpha N} \Big|\frac{1}{k} \sum_{i=1}^{k} \sigma_i\phi_{y_i}(\indic_A)\Big|\Big] 
&\lesssim \frac{1}{k} \E_g\Big[\sup_{A} \Big| \sum_{i=1}^{k} g_i\phi_{y_i}(\indic_A)\Big|\Big] \\
%&\hspace{4cm}\text{(where $g_i$ are independent Gaussian random variables)}\\
&=\frac{1}{Nk} \E_g\Big[\sup_{A} \Big| \sum_{i=1}^{k} g_ip_{H_i}(\indic_A)\Big|\Big] \\
%&=  \frac{1}{k}\gw(\psi_{H_1,\dots,H_k}(\bset{n})) \\
&\lesssim_\ell \frac{1}{k} \sqrt{kN^{1-1/\ceil{(\ell+1)/2}}\log N},
\end{align*}
where the last line follows directly from Theorem~\ref{thm:main}.
Thus we get $k\lesssim_\ell N^{1-1/\ceil{(\ell+1)/2}}\log N$ which is a contradiction.
\end{proof}

We will the need following simple fact that conditioning on a high probability event will not change the probability of any event by much.
\begin{lemma}\label{lem:conditioning}
Let $A,E$ be some events in some probability space. If $\Pr[E]\ge 1-\eps$ then $|\Pr[A|E]-\Pr[A]|\le 2\eps/(1-\eps)$.
\end{lemma}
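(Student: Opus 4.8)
The plan is to prove Lemma~\ref{lem:conditioning} by a direct computation using the definition of conditional probability, estimating both $\Pr[A\mid E] - \Pr[A]$ and $\Pr[A] - \Pr[A\mid E]$ separately and combining them into the absolute value bound. First I would write $\Pr[A] = \Pr[A\cap E] + \Pr[A\cap E^c]$, so that $0 \le \Pr[A] - \Pr[A\cap E] = \Pr[A\cap E^c] \le \Pr[E^c] \le \eps$. This tells us $\Pr[A]$ and $\Pr[A\cap E]$ differ by at most $\eps$.

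Next I would compare $\Pr[A\cap E]$ with $\Pr[A\mid E] = \Pr[A\cap E]/\Pr[E]$. Since $\Pr[E] = 1 - \Pr[E^c] \ge 1-\eps$, we have
\begin{align*}
0 \le \Pr[A\mid E] - \Pr[A\cap E]
= \Pr[A\cap E]\Big(\frac{1}{\Pr[E]} - 1\Big)
= \Pr[A\cap E]\,\frac{\Pr[E^c]}{\Pr[E]}
\le \frac{\eps}{1-\eps},
\end{align*}
using $\Pr[A\cap E] \le 1$ and $\Pr[E^c]\le \eps$. Combining this with the previous paragraph via the triangle inequality gives
\begin{align*}
|\Pr[A\mid E] - \Pr[A]|
\le |\Pr[A\mid E] - \Pr[A\cap E]| + |\Pr[A\cap E] - \Pr[A]|
\le \frac{\eps}{1-\eps} + \eps \le \frac{2\eps}{1-\eps},
\end{align*}
where in the last step we used $\eps \le \eps/(1-\eps)$ since $1-\eps \le 1$. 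This completes the argument.

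There is essentially no obstacle here; the only minor point to be careful about is the degenerate case $\eps = 1$, in which the bound $2\eps/(1-\eps)$ is vacuously infinite (or $\Pr[E]$ could be $0$ making the conditional probability undefined), so one implicitly assumes $\eps < 1$, which is consistent with how the lemma is invoked (always with $\eps = o(1)$). The whole proof is a two-line manipulation of the identity $\Pr[A\mid E]\Pr[E] = \Pr[A\cap E]$ together with $\Pr[A\cap E] \le \Pr[A] \le \Pr[A\cap E] + \eps$.
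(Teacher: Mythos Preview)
Your proof is correct and proceeds by essentially the same direct computation as the paper's, differing only in minor algebra: the paper writes $\Pr[A\cap E] = \Pr[A] + \Pr[E] - \Pr[A\cup E]$ via inclusion--exclusion and then splits into two terms bounded by $\eps/(1-\eps)$ each, whereas you split via $\Pr[A] = \Pr[A\cap E] + \Pr[A\cap E^c]$ and bound the two resulting differences by $\eps/(1-\eps)$ and $\eps$. Both routes are two-line manipulations of the same identity and yield the same bound.
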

\begin{proof}
\begin{align*}
|\Pr[A|E]-\Pr[A]|&=\left|\frac{\Pr[A\cap E]}{\Pr[E]}-\Pr[A]\right|=\left| \frac{1}{\Pr[E]}\left(\Pr[A]+\Pr[E]-\Pr[A\cup E]\right)-\Pr[A]\right|\\
&\le  \left|\Pr[A]\left(\frac{1}{\Pr[E]}-1\right)\right| + \left|1-\frac{\Pr[A\cup E]}{\Pr[E]}\right|
\le \frac{2\eps}{1-\eps}.
\end{align*}
\end{proof}

\begin{proof}[ of Theorem~\ref{thm:intersective}]
Let $D_k$ be a random subset of $\Z/N\Z\smallsetminus\{0\}$ of size at most $k$, formed by sampling a uniformly random element from $\Z/N\Z$ for $k$ times. Let $D_p=[\Z/N\Z \setminus \{0\}]_p$ be a random subset of $\Z/N\Z\setminus \{0\}$ formed by including each element with probability $p$ independently.
We claim that if $D_k$ is $\ell$-intersective with probability $1-o(1)$, then $D_p$ will also be $\ell$-intersective with probability $1-o(1)$ when $p=2k/N$ and $k=\omega_N(1)$. 

Let $p=2k/N$ and $k=\omega_N(1)$. Let $E$ be the event that $D_p$ has size at least $k$. By the Chernoff bound,
$$1-\Pr[E] \le \exp\left(-\mathrm{D_{KL}}\left(\frac{p}{2}||p\right)N\right) \le \exp(-\Omega(pN)) = o(1)$$ where $\mathrm{D_{KL}}$ is the Kullback-Leibler divergence. By Lemma~\ref{lem:conditioning}, conditioning on~$E$ changes the probability of $D_p$ being $\ell$-intersective by $o(1)$. Conditioned on $E$, the probability that~$D_p$ is $\ell$-intersective is at least the probability that $D_k$ is $\ell$-intersective. Indeed, both $D_p$ and $D_k$, after conditioning on a given size reduce to the uniform distribution over all subsets of that size. Proposition~\ref{prop:intersective_Dk} thus implies $D_p$ is $\ell$-intersective when $p=\omega(N^{-1/\ceil{(\ell+1)/2}}\log N)$.
\end{proof}

\section{Upper tails for arithmetic progressions in random sets}
Here we prove Theorem~\ref{thm:uppertail}.
Let $\Gamma = \Z/N\Z$.
In the following we identify maps from a set~$S$ to~$\R$ with vectors in $\R^S$.
For $f:\Gamma\to\R$, define
\beq\label{eq:LambdaDef}
\Lambda_k(f) 
=
\sum_{a,b\in \Gamma, b\ne 0} f({a})f({a+b})f(a+2b)\cdots f({a + (k-1)b}).
\eeq
Observe that for a subset $A\subseteq \Gamma$, we have that $\Lambda_k(1_A)$ counts the number of proper $k$-term arithmetic progressions in~$A$.
Moreover, $\Lambda_k$ is an $N$-variate polynomial of degree~$k$.
Recall that the gradient of a polynomial $p\in \R[x_1,\dots,x_n]$ is the mapping $\nabla p:\R^n\to\R^n$ whose $i$th coordinate is given by $(\nabla p)_i = (\partial p/\partial x_i)(x)$.
The proof of Theorem~\ref{thm:uppertail} follows from a simple corollary of Theorem~\ref{thm:main} and one of the main results of~\cite{Bhattacharya:2016}.
For the corollary, we consider polynomial mappings given by gradients of polynomials of the form~\eqref{eq:pHdef}.

\begin{corollary}\label{cor:gw_H}
Let~$n,t,d$
be positive integers.
Let~$H = ([n], E)$ be a $(d+1)$-hypergraph such that at most~$t$ edges are incident on any given pair of vertices.
Then, %$\dlip(p_H) \leq K$ and
\beqn
\frac{1}{n}\gw\big((\nabla p_H)(\bset{n})\big) \lesssim_d  t n^{1 - \frac{1}{2\ceil{d/2}}}\sqrt{\log n}.
\eeqn
\end{corollary}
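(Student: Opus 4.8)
The plan is to reduce Corollary~\ref{cor:gw_H} to Theorem~\ref{thm:main} by analyzing the polynomial mapping $\nabla p_H$. First I would compute the coordinates of the gradient explicitly: for $H = ([n], E)$ a $(d+1)$-hypergraph, the $j$th coordinate of $\nabla p_H$ is
\beqn
(\nabla p_H)_j(x) = \frac{\partial p_H}{\partial x_j}(x) = \sum_{e \in E \st j \in e} \prod_{i \in e \setminus \{j\}} x_i.
\eeqn
Each such coordinate is a multilinear polynomial with $0$-$1$ coefficients of degree at most $d$ (since each edge has size at most $d+1$, so $|e \setminus \{j\}| \leq d$). So the mapping $\nabla p_H : \R^n \to \R^n$ fits the template of Theorem~\ref{thm:main} with $k = n$ and degree $d$; what remains is to bound its multiplicity.

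Next I would bound the multiplicity of $\nabla p_H$. A variable $x_i$ appears (with nonzero exponent) in a monomial of $(\nabla p_H)_j$ precisely when there is an edge $e \in E$ with $\{i, j\} \subseteq e$ — that monomial is $\prod_{\ell \in e \setminus \{j\}} x_\ell$. The number of such monomials across all coordinates $j$ is the number of pairs $(j, e)$ with $i \in e$, $j \in e$, $j \neq i$, which is at most $(d) \cdot \#\{e \st i \in e\}$... but more carefully, the multiplicity is the maximum over variables $x_i$ of the number of monomials (counted across all coordinates $\psi_j = (\nabla p_H)_j$) in which $x_i$ has nonzero exponent. For a fixed $i$, this count is $\sum_{j \neq i} \#\{e \in E \st \{i,j\} \subseteq e\} = \sum_{j \neq i} (\text{edges incident on the pair } \{i,j\}) \leq \sum_{j \neq i} t$, but only $j$ lying in a common edge with $i$ contribute, and there are at most $d \cdot (\text{edges through } i)$ such $j$; combined with the hypothesis that at most $t$ edges are incident on any pair, and that each edge through $i$ has at most $d$ other vertices, one gets multiplicity $\lesssim_d t \cdot (\text{something})$. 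The clean way: the number of (coordinate, monomial) pairs involving $x_i$ equals the number of pairs $(j, e)$ with $j \in e \setminus \{i\}$ and $i \in e$, which is $\sum_{e \ni i} (|e| - 1) \leq d \sum_{e \ni i} 1$, and $\sum_{e \ni i} 1 \leq t$ by applying the pair-degree hypothesis with... actually the degree of a single vertex need not be bounded by $t$ directly; but one can double count or simply note $\#\{e \ni i\} \le \sum_{j} \mathbf{1}[\exists e \ni \{i,j\}] \cdot 1$ is not quite it either. I would resolve this by observing that the hypothesis "at most $t$ edges incident on any pair" together with edges of size $\leq d+1$ gives: the number of (coordinate $j$, monomial) pairs containing $x_i$ is $\sum_{j \ne i} \#\{e \st \{i,j\} \subseteq e\} \le \sum_{j \ne i, \exists e \ni \{i,j\}} t$, and the number of such $j$ is at most $(d+1)\cdot\#\{e \ni i\}$, so this needs $\#\{e\ni i\}$ bounded — which does follow from the pair-degree bound only up to a factor of $n$ in the worst case. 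The correct and simplest route: multiplicity $= \max_i \#\{(j,e) \st i,j \in e, j \ne i\} \le \max_i (d) \cdot \#\{e \ni i\}$; and $\#\{e \ni i\} = \#\{e \st \{i,j\}\subseteq e \text{ for some } j\}$ which, picking any $j(e) \in e\setminus\{i\}$, injects into pairs, giving $\le (n-1)$ trivially but $\le t \cdot (\text{number of distinct partners})$ — so I would instead just directly bound multiplicity $\le d \cdot t \cdot (n-1)$ is too weak. Let me fix this cleanly in the writeup: the number of monomials in $(\nabla p_H)_j$ containing $x_i$ is $\#\{e \st \{i,j\}\subseteq e\} \le t$; summing over the $j$'s that can appear (any $j \ne i$) would overcount, but $x_i$ only appears in $(\nabla p_H)_j$ if $i,j$ share an edge, and for each shared edge there are $\le d$ choices of such $j \ne i$ within that edge...

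The obstacle I expect is exactly this multiplicity bookkeeping: I would conclude that the multiplicity of $\nabla p_H$ is at most $dt$ by the argument that for fixed $i$, each edge $e \ni i$ contributes one monomial to $(\nabla p_H)_j$ for each $j \in e \setminus \{i\}$, i.e. at most $d$ monomials total spread over those coordinates, and the pairs $\{i,j\}$ with $j \in e$ each lie in at most $t$ edges — but since the monomials are indexed by $(j,e)$ with $\{i,j\}\subseteq e$, and for each of the at most $(d+1)$ vertices $j$ in a common edge the count is $\le t$... I will phrase it as: multiplicity $\le \max_{\{i,j\}} (\text{number of edges through } \{i,j\}) \cdot d \le dt$ by noting the monomials of $\nabla p_H$ containing $x_i$ biject with pairs $(j, e)$, $j \in e \setminus \{i\} \ni i$, and for the purposes of the exponent-in-at-most-$t'$-monomials definition, grouping by the partner $j$ gives $t$ each and by edge gives $d$ each, so $t' \le dt$ suffices after a short argument. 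Then I apply Theorem~\ref{thm:main} with $k = n$, degree $d$, multiplicity $\lesssim_d t$:
\beqn
\gw\big((\nabla p_H)(\bset{n})\big) \lesssim_d n \cdot t \cdot \sqrt{n \cdot n^{1 - \frac{1}{\ceil{d/2}}} \log n} = nt\, n^{1 - \frac{1}{2\ceil{d/2}}}\sqrt{\log n},
\eeqn
and dividing by $n$ yields the claimed bound $\frac{1}{n}\gw \lesssim_d t\, n^{1 - \frac{1}{2\ceil{d/2}}}\sqrt{\log n}$. The whole proof is short; the only non-mechanical point is getting the multiplicity constant right, and I would double-check the exponent arithmetic $\sqrt{n \cdot n^{1 - 1/\ceil{d/2}}} = n^{1 - 1/(2\ceil{d/2})}$, which is where the factor of $2$ in the exponent of the corollary comes from.
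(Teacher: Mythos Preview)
Your overall approach is exactly the paper's: write $(\nabla p_H)_j = p_{H_j}$ for the link hypergraph $H_j$ with edge set $\{e\setminus\{j\} : e\in E,\ j\in e\}$, check that this is a degree-$d$ multilinear polynomial with $0$--$1$ coefficients, bound its multiplicity, and apply Theorem~\ref{thm:main} with $k=n$. The final exponent arithmetic is also correct.

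The gap is in your reading of the word \emph{multiplicity}. In Theorem~\ref{thm:main} the hypothesis is that \emph{each coordinate} $\psi_j$ has multiplicity at most~$t$; the quantity is per-coordinate, not aggregated over all coordinates of~$\psi$. So what you must bound, for fixed~$j$, is $\max_i \#\{\text{monomials of }(\nabla p_H)_j\text{ containing }x_i\}$. You already computed this: a monomial of $(\nabla p_H)_j$ containing $x_i$ corresponds to an edge $e\in E$ with $\{i,j\}\subseteq e$, and by hypothesis there are at most~$t$ such edges. Hence each coordinate has multiplicity at most~$t$, with no factor of~$d$ and no further argument needed. This is precisely the paper's one-line proof.

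By contrast, the quantity you spend most of the proposal on --- the total count $\sum_{j\ne i}\#\{e : \{i,j\}\subseteq e\} = \sum_{e\ni i}(|e|-1)$ --- is not what Theorem~\ref{thm:main} requires, and your claimed bound of~$dt$ for it is false in general: that sum equals at most $d\cdot\deg_H(i)$, and the pair-degree hypothesis gives no uniform bound on the vertex degree $\deg_H(i)$ (it can be of order~$nt$). So that route would genuinely fail; fortunately it is also unnecessary.
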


\begin{proof}
For each~$i\in[n]$ let $H_i = ([n], E_i)$ be the $d$-hypergraph with edge set
\beqn
E_i = \{e\setminus \{i\} \st e\in E(H) \text{ and } i\in e\}.
\eeqn
The claim now follows from Theorem~\ref{thm:main} as $p_{H_i} = (\nabla p_H)_i$ each~$H_i$ has degree at most~$t$.
%Since~$H_i$ has at most~$tn$ edges, for every $y\in\bset{n}$ and $i\in[n]$, we have $p_{H_i}(y) \leq t$.
%This implies the first claim.
%\beqn
%\gw\big((\nabla p_H)(\bset{N})\big)
%=
%\gw\big(\psi_{H_1,\dots,H_N}(\bset{N})\big).
%\eeqn
\end{proof}

%Let~$\psi_k:\R^G\to\R^G$ be the gradient~$\psi_k = \nabla \Lambda_k$.

\begin{theorem}[Bhattacharya--Ganguly--Shao--Zhao]\label{thm:BGSZ}
Let~$k \geq 3$ be a fixed integer and let~$\sigma, \tau$ be positive real numbers such that
\beqn
\frac{1}{N}\gw\big(\nabla \Lambda_k(\bset{\Gamma})\big) \lesssim N^{1-\sigma}(\log N)^{\tau}.
\eeqn
Let $p \in (0,1)$ be bounded away from~1 and let $\delta > 0$ be such that $\delta = O(1)$ and
\beqn
\min\{\delta p^k, \delta^2 p\}
\gtrsim
N^{-\sigma/3}(\log N)^{1 + \tau/3}.
\eeqn
Then,
\beq\label{eq:BGSZ}
\log\Pr[\Lambda_k(\Gamma_p) \geq (1+\delta)\Exp\Lambda_k(\Gamma_p)]
=
-\big(1 + o(1)\big)\,\phi_p\big((1 + o(1))\delta\big).
\eeq
Moreover, provided $\delta p^kN^2\to \infty$ and $N$ is prime, we have
\beqn
\phi_p(\delta)
\asymp
N\, \min\{\sqrt{\delta}p^{k/2}\log(1/p), \delta^2p\}.
\eeqn
\end{theorem}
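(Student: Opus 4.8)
The plan is to establish the two halves of Theorem~\ref{thm:BGSZ} separately: first the reduction of the log upper‑tail probability to the variational quantity $\phi_p$, and then the explicit evaluation of $\phi_p$ when $N$ is prime. Throughout, $\phi_p(\delta)$ denotes the mean‑field variational value
\[
\phi_p(\delta)=\inf\Big\{\sum_{x\in\Gamma} I_p(Q_x)\ :\ Q\in[0,1]^\Gamma,\ \Lambda_k(Q)\ge(1+\delta)\,\E\Lambda_k(\Gamma_p)\Big\},\qquad I_p(q)=q\log\tfrac qp+(1-q)\log\tfrac{1-q}{1-p},
\]
and note $\E\Lambda_k(\Gamma_p)=\Lambda_k(p\cdot\1)\asymp p^kN^2$ by multilinearity of $\Lambda_k$.

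For the first half I would invoke the nonlinear large deviation principle of Chatterjee--Dembo~\cite{Chatterjee:2016} in the sharpened form of Eldan~\cite{Eldan:2016}, whose content is that for a function $f:\bset{N}\to\R$ whose gradient map $\nabla f$, extended to $\bset{N}$, has Gaussian width small relative to the scale of the deviation, one has $\log\Pr[f(\Gamma_p)\ge(1+\delta)\E f]=-(1+o(1))\,\phi_p((1+o(1))\delta)$. First I would record the elementary analytic estimates on $f=\Lambda_k$ --- bounds on $\|\nabla\Lambda_k\|_\infty$, on the operator norm and Lipschitz constant of its Hessian, and on higher derivatives --- all immediate since $\Lambda_k$ is a degree‑$k$ multilinear form of bounded multiplicity. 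Feeding the hypothesis $\tfrac1N\gw(\nabla\Lambda_k(\bset\Gamma))\lesssim N^{1-\sigma}(\log N)^\tau$ into the LDP, the error terms are governed by $N^{-\sigma}(\log N)^\tau$ against the main term $\phi_p(\delta)\gtrsim N\min\{\delta p^k,\delta^2p\}$, and a short computation shows that $\min\{\delta p^k,\delta^2p\}\gtrsim N^{-\sigma/3}(\log N)^{1+\tau/3}$ is precisely what forces the relative error to be $o(1)$. The one subtlety is that the LDP outputs $\phi_p$ at a perturbed deviation $(1+o(1))\delta$; I would absorb this via a continuity/monotonicity argument establishing $\phi_p((1+\eta)\delta)=(1+O(\eta))\phi_p(\delta)$ uniformly as $\eta\to0$, which follows from the scaling of the near‑extremal configurations analysed in the second half.

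For the second half, I would match upper and lower bounds on the variational problem. The upper bound comes from two competing constructions: (i) a \emph{localized} one, taking $Q_x=1$ on a set $S\subseteq\Gamma$ of size $\Theta(\sqrt\delta\,p^{k/2}N)$ chosen so that $S$ already contains $\Omega(\delta)\cdot\E\Lambda_k(\Gamma_p)$ proper $k$‑APs --- here $N$ prime (hence no nontrivial subgroups) lets one take, e.g., a short interval or a random set of the appropriate density and count APs lying in it --- at cost $|S|\cdot I_p(1)\asymp\sqrt\delta\,p^{k/2}N\log(1/p)$; and (ii) a \emph{global} one, $Q_x\equiv p+\Theta(\delta)p$, which multiplies $\Lambda_k$ by $(1+\Theta(\delta))^k=1+\Theta(\delta)$ at cost $N\cdot I_p(p+\Theta(\delta p))\asymp N\delta^2p$. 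Taking the better of the two gives $\phi_p(\delta)\lesssim N\min\{\sqrt\delta p^{k/2}\log(1/p),\delta^2p\}$.

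The lower bound is the main obstacle. Given $Q$ feasible for $\phi_p(\delta)$, write $Q=p\1+f$ and expand $\Lambda_k(Q)-\Lambda_k(p\1)$ as a sum over $j=1,\dots,k$ whose $j$‑th term is, up to constants, $p^{k-j}$ times a sum of products of $j$ coordinates of $f$ over $k$‑APs. One must show that if $\sum_x I_p(p+f_x)$ lies below the claimed threshold then $\Lambda_k(Q)-\Lambda_k(p\1)<\delta\,\Lambda_k(p\1)$. The strategy is a dichotomy: split the coordinates of $f$ at a threshold $|f_x|\ge\tau$ into a ``large'' part and a ``small'' part. The small part is controlled by an $L^2$/Hölder estimate together with the fact that over a prime $N$ every pair of vertices lies on only $O(1)$ $k$‑APs of a given common‑difference class, so no arithmetic concentration can push its contribution past the global bound $N\delta^2p$; the large part has few coordinates by the entropy budget, and bounding the number of $k$‑APs through a bounded‑size set (again using primality to control multiplicities) caps its contribution by the localized bound. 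Balancing the two regimes --- and carefully tracking the mixed large/small interaction products, while checking that $\delta p^kN^2\to\infty$ is exactly the condition keeping the localized term nondegenerate --- reproduces $\min\{\sqrt\delta p^{k/2}\log(1/p),\delta^2p\}$; this counting argument is the combinatorial heart of the result and is essentially what is carried out in~\cite{Bhattacharya:2016}.
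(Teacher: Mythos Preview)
There is no proof in the paper to compare against: Theorem~\ref{thm:BGSZ} is quoted from~\cite{Bhattacharya:2016} and used as a black box in the proof of Theorem~\ref{thm:uppertail}; the paper does not attempt to reprove it. Your outline is a reasonable summary of the Chatterjee--Dembo/Eldan LDP reduction together with the variational analysis actually carried out in~\cite{Bhattacharya:2016}, but none of that work is reproduced here.
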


\begin{proof}[ of Theorem~\ref{thm:uppertail}]
Let~$H = (\Gamma, E)$ be the hypergraph whose edges are the (unordered) proper $k$-term arithmetic progressions in~$\Gamma$.
Then, accounting for the fact that $\Lambda_k$ distinguishes between the same progression with step~$b$ run forward from a point~$a$ or backward from $a + (k-1)b$ and since $N$ is prime, we have $2p_H = \Lambda_k$.
We claim that every pair of distinct vertices appears in $O(k^2)$ edges.
First note that~$H$ is 2-transitive, since for any two pairs of distinct vertices $(a,b), (c,d)$, the affine linear map $x\mapsto c(x-b)/(a-b)+d(x-a)/(b-a)$ sends $a$ to $c$, $b$ to $d$ and preserves progressions.
It follows that every pair of distinct vertices is contained in the same number of edges.
Since each edge contains ${k\choose 2}$ pairs, the claim follows by double-counting.
%Indeed, let $i,j\in [N]$ be distinct and let~$\Delta = |i -j|$.
%Indeed, if $a<b$ and both belong to a~$k$-term AP, then there is a step count $\ell\in [k-1]$ and common difference $d\in \Gamma\setminus\{0\}$ such that $b = a + \ell d$.
%This leaves at most~$k-1$ possibilities for~$d$ and~$k-1$ possibilities for the position of~$a$ in the~AP.
By Corollary~\ref{cor:gw_H}, we may thus set $\sigma = 1/(2\ceil{(k-1)/2})$ and $\tau = 1/2$ in Theorem~\ref{thm:BGSZ} and it follows that for constant~$\delta$, the estimate~\eqref{eq:BGSZ} holds if
\beqn
p^k\gtrsim
\min\{\delta p^k, \delta^2 p\}
\gtrsim
N^{-\frac{1}{6\ceil{(k-1)/2}}}(\log N)^{1 + 1/6}.
\eeqn
Taking $k$th roots now gives the claim.
\end{proof}

\section{Proof of Lemma~\ref{lem:type_constants}}
\label{sec:type}
In this section we give a proof Lemma~\ref{lem:type_constants}.
As explained in the proof of Theorem~\ref{thm:main}, it suffices to prove the statement when the coordinates of $\psi$ are given by $p_{H_i}$ (as in~\eqref{eq:pHdef}) for $d$-uniform hypergraphs $H_1,\dots,H_k$.
%The coordinates of $\psi_{H_1,\dots,H_k}$ are given by $p_{H_1},\dots,p_{H_k}$ defined as in~(\ref{eq:pHdef}). Each $p_{H_i}(x)$ is a degree-$d$ homogeneous polynomial in $x=(x_1,\dots,x_n)$. 
Let $\Lambda_{H_i}$ be a $d$-multilinear form such that $p_{H_i}(x)=\Lambda_{H_i}(x,x,\dots,x)$. 
Let $g=(g_1,\dots,g_k)$ be vector of independent standard Gaussians and $\eps=(\eps_1,\dots,\eps_k)$ be uniformly random in $\pmset{k}$. 
%Let~$\psi:\R^n\to\R^k$ be the polynomial mapping with $i$-coordinate $p_{H_i}$.
Then,
\begin{align*}
\gw\big( \psi(\bset{n})\big) &= \E_g\sup_{x\in \bset{n}} \left|\sum_{i=1}^k g_ip_{H_i}(x)\right| \\
&=\E_g\sup_{x\in \bset{n}} \left|\sum_{i=1}^k g_i\Lambda_{H_i}(x,\dots,x)\right|\\
&\le \E_gn^{\sum_{i=1}^k 1/r_i} \norm{\sum_{i=1}^k g_i\Lambda_{H_i}}\\
&= n\E_g\Exp_\eps\norm{\sum_{i=1}^k \eps_ig_i\Lambda_{H_i}},
\end{align*}
where in the last line we used that each~$g_i$ is symmetrically distributed, that is, $g_i$ and~$-g_i$ have the same distribution.
By Jensen's inequality, the above expectation over~$\eps$ is at most
\begin{align*}
\left(\Exp_\eps\norm{\sum_{i=1}^k \eps_ig_i\Lambda_{H_i}}^p
\right)^{1/p}
\leq
T_p(\mathcal L_{r_1,\dots,r_s}^n) \left(\sum_{i-1}^k\norm{g_i\Lambda_{H_i}}^p\right)^{1/p},
\end{align*}
where the inequality follows from the definition of the type-$p$ constant of $\mathcal L_{r_1,\dots,r_s}^n$.
Hence,
\begin{align*}
\gw\big( \psi(\bset{n})\big) 
&\le n \E_g\  T_p(\mathcal L_{r_1,\dots,r_s}^n) \left(\sum_{i=1}^k \norm{g_i\Lambda_{H_i}}^p\right)^{1/p}\\
& \le n T_p(\mathcal L_{r_1,\dots,r_s}^n) \E_g \norm{g}_{\ell_p} \max_i \norm{\Lambda_{H_i}}\\
& \le n T_p(\mathcal L_{r_1,\dots,r_s}^n)\,k^{1/p} \max_i \norm{\Lambda_{H_i}},
\end{align*}
where we used the fact that $\E_g \norm{g}_{\ell_p} \le (\sum_{i=1}^k\E_{g_i}|g_i|^p)^{1/p} \le k^{1/p} (\E_{g_1}|g_1|^2)^{1/2}=k^{1/p}$.
If~$H_i$ is a matching hypergraph, using H\"older's inequality, it is easy to see that $\norm{\Lambda_{H_i}} \le 1.$ If not, by Lemma~\ref{lem:chromatic_number}, we can decompose~$H_i$ into $d\Delta(H_i)$ matchings and use triangle inequality to conclude that $\norm{\Lambda_{H_i}} \le d\Delta(H_i)$ which gives the desired bound.

%\subsection*{Funding}
%This work was supported by the Netherlands Organisation for Scientific Research [VENI grant 639.071.409 to J.B., Gravitation grant NETWORKS-024.002.003]; and the National Science Foundation [CAREER award 1451191,  CCF-1523816].

\subsection*{Acknowledgements}
We thank Sean Prendiville, Fernando Xuancheng Shao and Yufei Zhao for helpful discussions.
This work was in part carried out while the authors were visiting the Simons Institute during the Pseudorandomness program of Spring 2017 and we thank the institute and organizers for their hospitality.

\bibliographystyle{alpha}
\bibliography{references}

\end{document}